\tikzstyle{map}=[->,semithick]
\tikzstyle{arc}=[bend left,->,semithick]
\tikzstyle{rinclusion}=[right hook->,semithick]
\tikzstyle{linclusion}=[left hook->,semithick]
\newtheoremstyle{myremark} 
{7pt}                    
{7pt}                    
{}  	                 
{}                           
{\bf}       	         
{.}                          
{.5em}                       
{}  
\theoremstyle{plain}
\newtheorem{lemma}{Lemma}[section]
\newtheorem{theorem}[lemma]{Theorem}
\newtheorem{corollary}[lemma]{Corollary}
\newtheorem{conjecture}[lemma]{Conjecture}
\newtheorem{claim}[lemma]{Claim}
\theoremstyle{definition}
\newtheorem*{definition-cnk}{Definition~\ref{def:cnk}}
\newtheorem{question}{Question}
\newtheorem*{question-motivating}{Motivating Question}
\newcounter{parentnumber}
\theoremstyle{myremark}
\newtheorem{example}[lemma]{Example}
\newcommand{\R}{\mathbb{R}}
\newcommand{\Z}{\mathbb{Z}}
\newcommand{\RP}{\ensuremath{\mathbb{R}\mathrm{P}}}
\newcommand{\CP}{\ensuremath{\mathbb{C}\mathrm{P}}}
\newcommand{\diam}{\mathrm{diam}}
\newcommand{\conn}{\mathrm{conn}}
\newcommand{\st}{\mathrm{st}}
\newcommand{\so}{\mathrm{SO}}
\newcommand{\gh}{\mathrm{GH}}
\newcommand{\coind}{\mathrm{coind}}
\newcommand{\cov}{\mathrm{cov}}
\newcommand{\numCover}{\mathrm{numCover}}
\newcommand{\pack}{\mathrm{pack}}
\newcommand{\vr}[2]{\mathrm{VR}(#1;#2)}
\newcommand{\vrleq}[2]{\mathrm{VR}_\le(#1;#2)}
\newcommand{\vrm}[2]{\mathrm{VR}^\mathrm{m}(#1;#2)}
\newcommand{\vrmleq}[2]{\mathrm{VR}^\mathrm{m}_\le(#1;#2)}
\begin{document}

\title{
The connectivity of Vietoris--Rips complexes of spheres
}

\author{Henry Adams}
\address[HA]{Department of Mathematics, University of Florida, USA}
\email{henry.adams@ufl.edu}

\author{Johnathan Bush}
\address[JB]{Department of Mathematics and Statistics, James Madison University, USA}
\email{bush3je@jmu.edu}

\author{\v{Z}iga Virk}
\address[ZV]{University of Ljubljana and Institute IMFM, Slovenia}
\email{ziga.virk@fri.uni-lj.si}

\begin{abstract}
We survey what is known and unknown about Vietoris--Rips complexes and thickenings of spheres.
Afterwards, we show how to control the homotopy connectivity of Vietoris--Rips complexes of spheres in terms of coverings of spheres and projective spaces.
Let $S^n$ be the $n$-sphere with the geodesic metric, and of diameter $\pi$, and let $\delta > 0$.
Suppose that the first nontrivial homotopy group of the Vietoris--Rips complex $\vr{S^n}{\pi-\delta}$ of the $n$-sphere at scale $\pi-\delta$ occurs in dimension $k$, i.e., suppose that the connectivity is $k-1$.
Then $\cov_{S^n}(2k+2) \le \delta < 2\cdot \cov_{\RP^n}(k)$.
In other words, there exist $2k+2$ balls of radius $\delta$ that cover $S^n$, and no set of $k$ balls of radius $\frac{\delta}{2}$ cover the projective space $\RP^n$.
As a corollary, the homotopy type of $\vr{S^n}{r}$ changes infinitely many times as the scale $r$ increases.
\end{abstract}

\keywords{Vietoris--Rips complexes, spheres, homotopy connectivity, covering radii}

\subjclass{55N31, 55Q52, 52C17}

\maketitle


\section{Introduction}

\subsection*{Overview of the main result}
We study the homotopy connectivity of Vietoris--Rips complexes of spheres.
Let $S^n$ be the $n$-sphere equipped with the geodesic metric, and of diameter $\pi$.
Let the projective space $\RP^n$ be equipped with the quotient metric, so that $\RP^n$ has diameter $\frac{\pi}{2}$.
The \emph{Vietoris--Rips simplicial complex $\vr{S^n}{r}$} has $S^n$ as its vertex set, with a finite set $\sigma\subseteq S^n$ as a simplex if $\diam(\sigma)<r$.
The \emph{homotopy connectivity} $\conn(\vr{S^n}{r})$ is the largest integer $j$ such that the homotopy groups $\pi_i(\vr{S^n}{r})$ are trivial for $i\le j$.
For $X$ a metric space and $k$ a positive integer, let the \emph{covering radius} $\cov_X(k)$ be the infimal scale $r\ge 0$ such that there exist $k$ closed balls of radius $r$ in $X$ that cover all of $X$.
Our main theorem provides bounds on the homotopy connectivity of Vietoris--Rips complexes of spheres in terms of covering radii of spheres and of projective spaces.

\begin{theorem}
\label{thm:main}
For $n\ge 1$ and $\delta>0$, if $\conn(\vr{S^n}{\pi-\delta})=k-1$ then
\[\cov_{S^n}(2k+2) \le \delta < 2\cdot \cov_{\RP^n}(k).\]
\end{theorem}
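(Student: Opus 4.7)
My plan is to prove the two bounds independently, using the free $\Z/2$-action on $\vr{S^n}{\pi-\delta}$ (free for $\delta>0$ because antipodal points are at distance $\pi$ and never lie in a common simplex) together with the nerve theorem and cohomological Borsuk--Ulam reasoning.

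For the upper bound $\delta<2\cov_{\RP^n}(k)$, I argue by contraposition. Assuming a cover of $\RP^n$ by $k$ closed balls of radius $\delta/2$, I pull back to a $\Z/2$-symmetric cover of $S^n$ by $2k$ balls in antipodal pairs. Since $\delta<\pi$ makes antipodal balls disjoint, the nerve $\cN$ is a free $\Z/2$-simplicial complex of dimension at most $k-1$, with $\cN\simeq S^n$ by the nerve theorem (balls of radius less than $\pi/2$ are geodesically convex, so all intersections are contractible). When $\delta<\pi/2$, any simplex of $\cN$ has centers of pairwise distance at most $\delta<\pi-\delta$, giving a $\Z/2$-equivariant simplicial map $\cN\to\vr{S^n}{\pi-\delta}$. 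Combined with a $\Z/2$-equivariant partition-of-unity map $S^n\to\cN$, this factors the canonical vertex inclusion $\iota\colon S^n\hookrightarrow\vr{S^n}{\pi-\delta}$ through $\cN$ up to $\Z/2$-homotopy. Passing to $\Z/2$-quotients, $\bar\iota\colon\RP^n\to\vr{S^n}{\pi-\delta}/\Z/2$ factors through $\cN/\Z/2$, a complex of dimension at most $k-1$. Combining $(k-1)$-connectedness of $\vr{S^n}{\pi-\delta}$ (so that the Stiefel--Whitney class $w^k$ survives in the Serre spectral sequence for $\vr{S^n}{\pi-\delta}/\Z/2\to B\Z/2$) with naturality $\bar\iota^* w_{\vr/\Z/2}=w_{\RP^n}$ between free $\Z/2$-spaces yields $\bar\iota^*(w^k)=w^k\in\h^k(\RP^n;\Z/2)$, which must also vanish because $\bar\iota$ factors through a complex of dimension at most $k-1$. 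For $k\le n$ this directly gives the contradiction. The subtle case $k>n$ I expect to require additionally using that the connectivity is \emph{exactly} $k-1$ to promote the cohomological factoring to a $\Z/2$-equivariant domination $\vr{S^n}{\pi-\delta}\simeq_{\Z/2}\cN\simeq S^n$, forcing the connectivity to collapse to $n-1$ and contradicting $k>n$.

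For the lower bound $\cov_{S^n}(2k+2)\le\delta$, I aim to produce $2k+2$ closed balls of radius $\delta$ covering $S^n$. The geometric observation is that a $\Z/2$-symmetric cross-polytope configuration $\{\pm v_0,\ldots,\pm v_k\}$ of $2(k+1)$ vertices in $\vr{S^n}{\pi-\delta}$ (antipodal pairs with non-antipodal pairwise distances in $(\delta,\pi-\delta)$) gives $2k+2$ balls $\{B(\pm v_i,\delta)\}_{i=0}^k$ covering $S^n$ if and only if no point $y\in S^n$ satisfies $d(y,v_i)\in(\delta,\pi-\delta)$ for every $i$; otherwise, $\{\pm v_0,\ldots,\pm v_k,\pm y\}$ extends to a larger cross-polytope in $\vr{S^n}{\pi-\delta}$. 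The hypothesis $\pi_k(\vr{S^n}{\pi-\delta})\neq 0$ produces a non-null simplicial $k$-sphere in the Rips complex, which I plan to realize by an equivariantly embedded cross-polytope chosen maximally with respect to the extension above; non-extendability then yields the cover. The main obstacle, which I view as the hardest step, is extracting such a \emph{maximal} cross-polytope that still represents a non-null $\pi_k$-class: it requires both showing the extension process terminates (likely using compactness of $S^n$, since the distance constraints prevent packing arbitrarily many such symmetric configurations) and arranging the maximal cross-polytope to represent a non-null class (likely via a $\Z/2$-equivariant Borsuk--Ulam argument leveraging the cohomological index of $\vr{S^n}{\pi-\delta}$).
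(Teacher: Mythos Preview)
Both halves of your proposal contain genuine gaps that look difficult to close along the lines you sketch.

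\textbf{Upper bound.} Your nerve argument, read charitably, handles only the vacuous regime $k\le n$. The $\Z/2$-equivariant composite $S^n\to\cN\to\vr{S^n}{\pi-\delta}$ already gives $\bar\phi^*(w^k)=w^k$ in $H^k(\RP^n;\Z/2)$ by naturality of the first Stiefel--Whitney class for \emph{any} equivariant map of free $\Z/2$-spaces, and this vanishes because $\bar\phi$ factors through the $(k-1)$-dimensional complex $\cN/(\Z/2)$; no connectivity of the Rips complex is used. But $w^k=0$ in $\RP^n$ automatically once $k>n$, and indeed $\cov_{\RP^n}(k)=\tfrac{\pi}{2}$ for $k\le n$, so the inequality $\delta<2\cov_{\RP^n}(k)=\pi$ is empty there. (Separately, the ``canonical vertex inclusion'' $\iota\colon S^n\hookrightarrow\vr{S^n}{\pi-\delta}$ is not continuous, so its $\Z/2$-homotopy class is not defined.) For the only interesting case $k>n$ you would need a $\Z/2$-map \emph{out of} $\vr{S^n}{\pi-\delta}$ into a free $\Z/2$-complex of dimension at most $k-1$; your construction produces only a map \emph{into} the Rips complex, and the proposed ``domination'' has no mechanism---there is no evident equivariant retraction of the uncountable-vertex complex $\vr{S^n}{\pi-\delta}$ onto $\cN$. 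The paper builds the outgoing map directly: from a lift $\{B[\pm x_i;\tfrac{\delta}{2}]\}_{i=1}^k$ of the cover it defines odd simplexwise-linear functions $f_i\colon\vr{S^n}{\pi-\delta}\to\R$ measuring signed penetration into $B[x_i;\tfrac{\delta}{2}]$ versus $B[-x_i;\tfrac{\delta}{2}]$, checks $(f_1,\ldots,f_k)$ misses the origin, and normalizes to get an odd map to $S^{k-1}$. Then $(k-1)$-connectedness would yield an odd composite $S^k\to\vr{S^n}{\pi-\delta}\to S^{k-1}$, contradicting Borsuk--Ulam.

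\textbf{Lower bound.} The step ``realize a non-null simplicial $k$-sphere by an equivariantly embedded cross-polytope'' is the crux, and it is unjustified. A nontrivial class in $\pi_k$ is represented by a simplicial map from \emph{some} triangulation of $S^k$; there is no reason it can be taken to be an embedded cross-polytope, let alone one whose $2(k+1)$ vertices form genuine antipodal pairs in $S^n$. Your termination/compactness remarks do not address how to trade an arbitrary simplicial $k$-sphere for such a configuration without killing the homotopy class. The paper bypasses this entirely by proving the contrapositive via a clique-complex connectivity criterion of Meshulam, Kahle, and Barmak: if $\delta<\cov_{S^n}(2k+2)$ then no $2k+2$ closed $\delta$-balls cover $S^n$, hence any $2k+2$ open balls of radius $\pi-\delta$ have a common point; thus every subcomplex of $\vr{S^n}{\pi-\delta}$ on at most $2k+2$ vertices lies in a closed star, making the complex $(2k+2)$-conic and therefore $k$-connected.
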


Here, $\cov_{S^n}(2k+2)$ is the smallest radius such that $2k+2$ balls of that radius cover $S^n$, and $\cov_{\RP^n}(k)$ is the smallest radius such that $k$ balls of that radius cover the projective space $\RP^n$ of diameter $\frac{\pi}{2}$.
So, Theorem~\ref{thm:main} says that if $\conn(\vr{S^n}{\pi-\delta})=k-1$, then (i) there exist $2k+2$ balls of radius $\delta$ that can cover $S^n$, and (ii) no $k$ balls of radius $\frac{\delta}{2}$ can cover all of $\RP^n$.
Lifting from $\RP^n$ to $S^n$, (ii) is the same as saying there is no set $X$ of $k$ points in $S^n$ such that the $2k$ balls of radius $\frac{\delta}{2}$ centered at the points in $X\cup(-X)$ cover $S^n$.

We remark that $\vr{S^n}{r}$ is contractible if $r\ge \pi$.
See Figures~\ref{fig:s1_intervals-intro},~\ref{fig:S1-intervals}, and~\ref{fig:S2-intervals} for illustrations of Theorem~\ref{thm:main} in the case $n=1$ and $n=2$.

\begin{figure}[h]
\includegraphics[width=\linewidth]{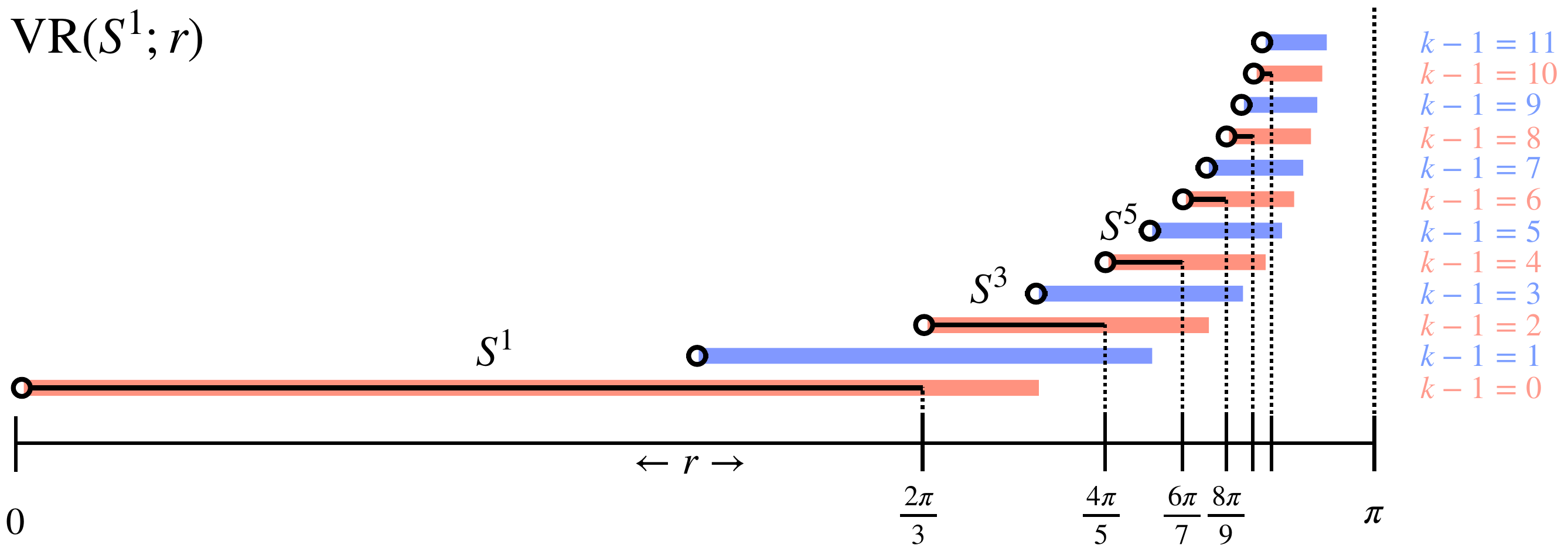}
\caption{The homotopy types of $\vr{S^1}{r}$ as $r$ varies~\cite{AA-VRS1} are indicated by black bars (cf.\ Figure~\ref{fig:circle-barcodes}).
Theorem~\ref{thm:main} gives intervals where $\vr{S^1}{r}$ may have connectivity $k-1$, which are indicated by colored bars (cf.\ Figure~\ref{fig:S1-intervals}).
Left endpoints of red bars are tight. 
All intervals have open left-endpoints and closed right-endpoints.}
\label{fig:s1_intervals-intro}
\end{figure}

Another way to present our main result is as follows.
For $X$ a metric space and $\delta>0$, let the \emph{covering number} $\numCover_X(\delta)$ be the minimum number of closed balls of radius $\delta$ that are needed in order to cover $X$.
Theorem~\ref{thm:main} bounds the homotopy connectivity of Vietoris--Rips complexes of spheres in terms of covering numbers of spheres and of projective spaces:
\begin{corollary}
\label{cor:main1}
For $n\ge 1$ and $\delta>0$,
\[\tfrac{1}{2}\numCover_{S^n}(\delta)-2 \le \conn(\vr{S^n}{\pi-\delta}) \le \numCover_{\RP^n}(\tfrac{\delta}{2})-2.\]
\end{corollary}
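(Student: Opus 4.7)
The plan is to obtain Corollary~\ref{cor:main1} as a direct reformulation of Theorem~\ref{thm:main}, using that for a compact metric space $X$ the infimum defining $\cov_X(m)$ is attained, which gives the equivalences
\[\cov_X(m)\le r \iff \numCover_X(r)\le m \quad\text{and}\quad \cov_X(m)>r \iff \numCover_X(r)>m.\]
The attainment step is the only non-trivial ingredient: I would prove it by a sequential compactness argument, taking a limit (along a subsequence, and using pigeonhole on the ball indices) of a sequence of covers of $X$ by $m$ closed balls of radii $\cov_X(m)+1/j$ to produce a cover by $m$ closed balls of radius exactly $\cov_X(m)$.

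Given these equivalences, I would set $k := \conn(\vr{S^n}{\pi-\delta})+1$ and invoke Theorem~\ref{thm:main} to obtain
\[\cov_{S^n}(2k+2)\le \delta < 2\cdot\cov_{\RP^n}(k).\]
Applying the first equivalence with $X=S^n$, $m=2k+2$, $r=\delta$ to the left inequality yields $\numCover_{S^n}(\delta)\le 2k+2$, which rearranges to the lower bound $k-1\ge \tfrac12\numCover_{S^n}(\delta)-2$ claimed in the corollary. Rewriting the right inequality as $\tfrac\delta2<\cov_{\RP^n}(k)$ and applying the second equivalence with $X=\RP^n$, $m=k$, $r=\tfrac\delta2$ yields $\numCover_{\RP^n}(\tfrac\delta2)>k$, i.e.\ $k-1\le \numCover_{\RP^n}(\tfrac\delta2)-2$, which is the upper bound.

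I do not expect a substantive obstacle here; aside from the compactness/attainment remark above, the argument is pure book-keeping on top of Theorem~\ref{thm:main}. The only points meriting care are the strict-versus-non-strict distinction between the two inequalities, the offset by $1$ between the indexing variable $k$ and the connectivity, and, implicitly, the observation that since $\RP^n$ is compact and $\delta>0$, the quantity $\numCover_{\RP^n}(\tfrac\delta2)$ is finite, so the corollary is consistent with $\conn(\vr{S^n}{\pi-\delta})$ being a finite integer to which Theorem~\ref{thm:main} applies.
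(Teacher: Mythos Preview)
Your proposal is correct and follows essentially the same route as the paper: set $k-1=\conn(\vr{S^n}{\pi-\delta})$, apply Theorem~\ref{thm:main}, and translate the two inequalities into statements about covering numbers, invoking the attainment of the infimum in $\cov_X(m)$ (the paper's Lemma~\ref{lem:cov-inf}) for the left-hand bound. The only cosmetic difference is that you would prove the attainment lemma via sequential compactness, whereas the paper argues via the finite intersection property on sublevelsets of the map $(x_1,\ldots,x_k)\mapsto \max_{x\in X} d(x,\{x_1,\ldots,x_k\})$; both are standard and equivalent in this setting.
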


As a consequence, we obtain the following corollary.

\begin{corollary}
\label{cor:main2}
For $n\ge 1$ the homotopy type of $\vr{S^n}{r}$ changes infinitely many times as $r$ increases.
Indeed, for any $\varepsilon>0$, there are infinitely many changes to the homotopy type of $\vr{S^n}{r}$ as $r$ varies over the interval $(\pi-\varepsilon,\pi)$.
\end{corollary}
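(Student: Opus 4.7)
The plan is to leverage the lower bound in Corollary~\ref{cor:main1} together with the elementary fact that the covering number $\numCover_{S^n}(\delta)$ diverges as $\delta \to 0^+$. The whole argument boils down to showing that $\delta \mapsto \conn(\vr{S^n}{\pi-\delta})$ is a finite, integer-valued function that is unbounded as $\delta \to 0^+$, and then concluding via the homotopy invariance of connectivity.

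First I would record that $\numCover_{S^n}(\delta) \to \infty$ as $\delta \to 0^+$. A one-line volume comparison suffices: $S^n$ has positive $n$-dimensional Riemannian volume, while each closed geodesic $\delta$-ball has volume bounded by $C_n \delta^n$ for a dimensional constant $C_n$, so $\numCover_{S^n}(\delta) \ge \vol(S^n)/(C_n\delta^n) \to \infty$. Substituting this into the lower bound of Corollary~\ref{cor:main1} yields
\[\conn(\vr{S^n}{\pi-\delta}) \;\ge\; \tfrac{1}{2}\numCover_{S^n}(\delta) - 2 \;\longrightarrow\; \infty \text{ as } \delta \to 0^+.\]
In particular, for any fixed $\varepsilon > 0$ the connectivity is unbounded on the interval $\delta \in (0,\varepsilon)$.

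Next, the upper bound in Corollary~\ref{cor:main1}, namely $\conn(\vr{S^n}{\pi-\delta}) \le \numCover_{\RP^n}(\delta/2) - 2$, is a finite integer for every $\delta > 0$ by compactness of $\RP^n$. Thus the map $\delta \mapsto \conn(\vr{S^n}{\pi-\delta})$ takes integer values on $(0,\varepsilon)$, and being unbounded it realizes infinitely many distinct values. Since spaces of different connectivity are never homotopy equivalent, this produces infinitely many distinct homotopy types---and hence infinitely many changes of homotopy type---among the complexes $\vr{S^n}{r}$ for $r \in (\pi-\varepsilon,\pi)$, as claimed. There is essentially no obstacle in this argument; the only step with any content is the volume estimate for $\numCover_{S^n}(\delta)$, which is routine.
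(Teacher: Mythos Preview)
Your proof is correct and follows essentially the same approach as the paper: both use the two inequalities of Corollary~\ref{cor:main1} to show that $\conn(\vr{S^n}{\pi-\delta})$ is finite for each $\delta>0$ yet unbounded as $\delta\to 0^+$, and then conclude via homotopy invariance of connectivity. Your only addition is the explicit volume estimate justifying $\numCover_{S^n}(\delta)\to\infty$, which the paper leaves implicit.
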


We conjecture that the number of different homotopy types is countably infinite.

\begin{conjecture}
\label{conj:countable}
We conjecture that there are a countable number of critical scales $0=r_0 < r_1 < r_2 < r_ 3 < \ldots < \pi$
such that for all $r_i<r<r'<r_{i+1}$, the inclusion $\vr{S^n}{r}\hookrightarrow\vr{S^n}{r'}$ is a homotopy equivalence.
\end{conjecture}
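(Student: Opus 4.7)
To prove Conjecture~\ref{conj:countable}, the plan is to combine a finite-approximation strategy with the quantitative bounds from Theorem~\ref{thm:main}. For each positive integer $m$, fix a finite $\tfrac{1}{m}$-net $X_m \subset S^n$. Since $X_m$ has only finitely many pairwise distances, the complex $\vr{X_m}{r}$ has at most finitely many critical scales, so the set $C^\ast := \bigcup_m \{\text{critical scales of } \vr{X_m}{\cdot}\}$ is a countable subset of $[0,\pi]$. The conjecture would follow if one could show that every critical scale of $\vr{S^n}{\cdot}$ lies in $C^\ast$ (or in an explicit countable set built from it), leaving accumulation only at $\pi$, which the conjecture permits.

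The key technical step is a Gromov--Hausdorff stability statement for Vietoris--Rips filtrations strong enough to assert that if $\vr{X_m}{r}$ has locally constant homotopy type on an open sub-interval of $[0,\pi)$ for all $m$ sufficiently large, then so does $\vr{S^n}{r}$. I would try to set this up equivariantly, using the $\oo(n+1)$-action on $S^n$ and replacing each $X_m$ by its orbit $\oo(n+1)\cdot X_m$, so that the inclusions $\vr{X_m}{r} \hookrightarrow \vr{S^n}{r}$ are $\oo(n+1)$-equivariant and compatible as $m$ varies. The goal is to realize $\vr{S^n}{r}$ as a homotopy colimit of the $\vr{X_m}{r}$ in a way which is constant in $r$ on each constancy sub-interval coming from the finite approximations.

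The second ingredient is Theorem~\ref{thm:main} itself, used as a quantitative replacement for Morse-theoretic critical-point counting. It forces the set of $r$ with $\conn(\vr{S^n}{r}) = k-1$ into the bounded sub-interval $(\pi - 2\cov_{\RP^n}(k),\ \pi - \cov_{S^n}(2k+2)]$ of $[0,\pi)$, giving a natural ``slab'' decomposition indexed by $k$. It would therefore suffice to show that only finitely many critical scales occur in each connectivity slab; summing over $k \in \N$ then yields countability. The hardest step, and the place where the approach is most likely to break down, will be ruling out accumulation of critical scales within a single slab, i.e.\ at a fixed level of connectivity. This will probably require a detailed analysis of which equivariant finite configurations on $S^n$ contribute genuinely new homotopy classes as $r$ varies, and may need a tame/definable framework (e.g.\ an o-minimal or semi-algebraic version of persistence) to carry through without relying on direct geometric Morse theory, which is not available for $\vr{S^n}{r}$ at these scales.
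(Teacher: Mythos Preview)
This statement is labeled a \emph{conjecture} in the paper, and the paper does not prove it; it is listed as open (see also the related open question in Section~\ref{sec:conclusion}). So there is no ``paper's own proof'' to compare against, and your proposal should be read as an attempt at an open problem.

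On the substance: the outline has a genuine gap at the step you yourself flag as ``the key technical step.'' Gromov--Hausdorff stability of Vietoris--Rips filtrations gives you interleavings of persistence modules, not control over homotopy types or over the \emph{inclusion maps} $\vr{S^n}{r}\hookrightarrow\vr{S^n}{r'}$. Knowing that $\vr{X_m}{r}$ has locally constant homotopy type on an interval for every $m$ does not, by any known result, force the same for $\vr{S^n}{r}$; the paper explicitly remarks (Section~\ref{sec:related}) that the finite-subset/homotopy-colimit strategy which succeeded for $S^1$ relied on a near-complete understanding of Vietoris--Rips complexes of finite subsets of $S^1$, and that no such understanding exists for $n\ge 2$. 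Moreover, there is no reason the critical scales of $\vr{S^n}{\cdot}$ should literally lie in the union $C^\ast$ of critical scales of your chosen nets: already for $S^1$ the true critical scales $\tfrac{2\pi k}{2k+1}$ come from evenly spaced configurations, not from a generic $\tfrac{1}{m}$-net.

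The ``slab'' idea from Theorem~\ref{thm:main} is also weaker than you need. Connectivity being constant on an interval does not preclude infinitely many changes of homotopy type within that interval (one can change higher homotopy without lowering connectivity), and the paper does not even establish that connectivity is monotone in $r$ for $S^n$ --- that is itself a separate conjecture in Section~\ref{sec:conclusion}. Finally, the suggestion to invoke an o-minimal or semi-algebraic framework runs into the difficulty that $\vr{S^n}{r}$ is an infinite-dimensional, non-locally-finite simplicial complex, and no tameness theory currently applies to it. In short, each of your three main ingredients (homotopy-level stability from finite approximations, alignment of critical scales, finiteness within a connectivity slab) is an open problem of comparable difficulty to the conjecture itself.
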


The proof of Theorem~\ref{thm:main} relies on several prior results.
Indeed, the first inequality in Theorem~\ref{thm:main} follows from a result by Meshulam~\cite{meshulam2001clique} which lower bounds the connectivity of clique complexes; see also Kahle~\cite{Kahle2009} or an appendix by Barmak~\cite{farber2023large}.
Though we have been interested in the homotopy types of Vietoris--Rips complexes of spheres for a decade, only in November 2023 did we learn about Meshulam's techniques, from a recent paper by Bendersky and Grbi\'{c}~\cite{bendersky2023connectivity}.
The second inequality follows from Adams, Bush, and Frick~\cite{ABF2}, who upper bound the $\Z/2$ topology of Vietoris--Rips \emph{metric thickenings}; these upper bounds also apply to Vietoris--Rips simplicial complexes.
We believe that determining the homotopy types of Vietoris--Rips complexes of spheres is an important problem, and that the bounds on connectivity given in Theorem~\ref{thm:main} are a strong initial step in this direction.

\subsection*{Why are the homotopy types of Vietoris--Rips complexes of spheres $\vr{S^n}{r}$ important?}
We provide three main reasons.

\emph{First, they arise in applications of topology to data.}
One of the main applications of topology is to the analysis of data, as the shape of data can reflect important patterns within~\cite{Carlsson2009}.
A popular way for data scientists to approximate the shape of a dataset is by thickening it, and then computing peristent homology~\cite{EdelsbrunnerHarer,edelsbrunner2000topological,robins2000computational}.
A common way to thicken a dataset is to compute a Vietoris--Rips complex~\cite{Vietoris27}.
We do not claim that a better knowledge of Vietoris--Rips complexes of spheres will lead to new data science techniques.
But, an important role for mathematicians is to provide a foundational understanding of existing data science techniques, and we do not fully understand applications of persistent homology to data without understanding Vietoris--Rips complexes of spheres.
A dataset would be deemed to be ``simple'' if it were sampled from a distribution whose support was an $n$-sphere, yet we do not know the Vietoris--Rips persistent homology of such datasets for $n\ge 2$.

\emph{Second, the homotopy types of $\vr{S^n}{r}$ have mathematical applications.}
For $X$ a metric space and $r\ge 0$, the \emph{Vietoris--Rips simplicial complex $\vr{X}{r}$} has $X$ as its vertex set, with a finite set $\sigma\subseteq X$ as a simplex if $\diam(\sigma)<r$.
Thus, $\vr{X}{r}$ can be thought of as a natural way to incorporate \emph{all} finite subsets of $X$ of diameter bounded by $r$ into a single parameter space or conformation space.
The homotopy type of this Vietoris--Rips space $\vr{X}{r}$ has been used to provide generalizations of the Borsuk--Ulam theorem~\cite{ABF,ABF2}, and in order to provide quantitative bounds on Gromov--Hausdorff distances between metric spaces~\cite{ChazalDeSilvaOudot2014,chazal2009gromov,GH-BU-VR,HvsGH}.

\emph{Third, understanding $\vr{S^n}{r}$ requires techniques combining mathematical areas.}
It appears to be a difficult problem to determine the homotopy types of Vietoris--Rips complexes of spheres.
A variety of tools have proven useful, including combinatorial topology, metric geometry, and geometric group theory.
We predict that new techniques will be invented in order to understand the homotopy types of Vietoris--Rips complexes of spheres, and that these new techniques will combine ideas from different fields of mathematics in creative and valuable ways.

\subsection*{Outline}
In this paper, we provide bounds on the homotopy connectivity of Vietoris--Rips complexes of spheres, i.e.\ on the dimension of the first nontrivial homotopy group.
Our goal is to explain these bounds in an accessible and self-contained manner.
Along the way, we survey the history of the problem.
We begin in Section~\ref{sec:related} with a survey of work related to the homotopy types of Vietoris--Rips complexes of spheres.
Some portions of this section are forward-looking, including the explanation of the need for a Morse theory and a Morse--Bott theory; we hope this related work section is a valuable contribution of our paper.
Section~\ref{sec:notation} contains the notation we need for our main results.
In Section~\ref{sec:proof} we first prove Theorem~\ref{thm:main}.
Our proof relies on two steps: lower bounds on the connectivity of clique complexes due to Meshulam, Kahle, or Barmak~\cite{meshulam2001clique,Kahle2009,farber2023large}, and upper bounds on the connectivity of Vietoris--Rips thickenings of spheres due to Adams, Bush, and Frick~\cite{ABF2}.
Afterwards, we also prove Corollaries~\ref{cor:main1} and~\ref{cor:main2} in Section~\ref{sec:proof}.
In Section~\ref{sec:examples} we provide examples of the known connectivity bounds on $\vr{S^n}{r}$, relying on the values of covering radii of spheres and of projective spaces.
We conclude in Section~\ref{sec:conclusion} by sharing a list of open questions.

\section{Related Work}
\label{sec:related}

We survey related work on Vietoris--Rips complexes and thickenings, focusing on recent work in applied topology.

\subsection{Vietoris and Rips}
Vietoris--Rips complexes were invented by 
Vietoris in a cohomology theory for metric spaces~\cite{Vietoris27,lefschetz1942algebraic}.
Independently, they were introduced by 
Rips in geometric group theory as a natural way to thicken a space.
Indeed, Rips used these complexes to show that torsion-free hyperbolic groups have finite-dimensional Eilenberg--MacLane spaces~\cite[III.G.3, Theorem~3.21]{bridson2011metric}.

\subsection{Coarse geometry}
See~\cite{berestovskii2007uniform,brodskiy2013rips,cencelj2012combinatorial,conant2014discrete,plaut2013discrete,roe1993coarse,WilkinsThesis} for connections of Vietoris--Rips complexes to coarse geometry, and~\cite{bartholdi2012hodge} for a connection to Hodge theory.

\subsection{Hausmann and Latschev}
In~\cite{Hausmann1995}, Hausmann proves that for $M$ a manifold and scale parameter $r$ sufficiently small compared to the curvature of $M$, the Vietoris--Rips complex $\vr{M}{r}$ is homotopy equivalent to $M$.
This is not an easy theorem to prove.
Indeed, the inclusion map $M\hookrightarrow \vr{M}{r}$ is not continuous, since the vertex set of a simplicial complex is equipped with the discrete metric.
Hausmann's result can be thought of as an analogue of the nerve theorem~\cite{Borsuk1948} for Vietoris--Rips complexes.
Latschev~\cite{Latschev2001} extends Hausmann's result to nearby metric spaces: if a metric space $X$ is sufficiently close to $M$ in the Gromov--Hausdorff distance, and the scale $r$ is large enough compared to the density of the noisy sample $X$ but small enough compared to the curvature of the manifold $M$, then $\vr{X}{r}$ is also homotopy equivalent to $M$.

\subsection{Persistent homology}
The results of Hausmann and Latschev are part of the motivation for why Vietoris--Rips complexes have become commonly used tools in applied and computational topology.
Large sets of high-dimensional data are common in most branches of science, and their shapes reflect important patterns within.
One frequently used tool to recover the shape of data is persistent homology~\cite{Carlsson2009}.
How can we recover the shape of a dataset $X$ sampled from an unknown metric space $M$?
Latschev's result shows we can do this by computing a Vietoris--Rips complex on $X$ with a scale parameter $r$ sufficiently small compared to the geometry of $M$.
Since we do not know \emph{a priori} how to choose the scale $r$, the idea of persistent homology is to compute the homology of the Vietoris--Rips complex of a dataset $X$ over a large range of scale parameters $r$ and to trust those topological features which persist.
As the main idea of persistence is to allow $r$ to vary, the assumption that scale $r$ is kept sufficiently small fails in practice:
data scientists instead let the scale parameter $r$ in the Vietoris--Rips complexes $\vr{X}{r}$ vary from zero to very large.
The persistent homology of Vietoris--Rips complexes is being computed frequently, and there is efficient software designed to do this~\cite{bauer2021ripser,zhang2020gpu,GlissePritam2022}, even though we do not yet have a mathematical understanding of how these simplicial complexes behave at large scales.

Suppose that we are in the particularly nice setting where our dataset is sampled from a manifold $M$.
This is the \emph{manifold hypothesis} which underlies many techniques in manifold learning~\cite{lee2007nonlinear}.
Suppose that we sample more and more points from $M$; let $X_n$ denote a sample of $n$ points taken from some probability distribution whose support is $M$.
As $n$ goes to infinity, the Hausdorff distance between $X_n$ and $M$ goes to zero.
Therefore, the stability of persistent homology~\cite{ChazalDeSilvaOudot2014,chazal2009gromov,cohen2007stability} implies that the persistent homology of the Vietoris--Rips complexes of $X_n$ converges to the persistent homology of the Vietoris--Rips complexes of $M$.
But, what is the persistent homology of the Vietoris--Rips complexes of a manifold?
We still have much to learn about this question.
Indeed, to the best of our knowledge, the point and the circle are the only connected compact Riemannian manifolds without boundary whose Vietoris--Rips persistent homology is known at all scales, or whose Vietoris--Rips complexes are known up to homotopy type at all scales.
In other words, even when we are presented with extremely clean data satisfying the manifold hypothesis, we cannot fully describe what the persistent homology of $X_n$ converges to as more points are drawn, without first better understanding the behavior of Vietoris--Rips complexes of manifolds.

\subsection{Vietoris--Rips complexes of the circle}
As mentioned above, the homotopy types and persistent homology of the Vietoris--Rips complexes of the circle are known.
For $k\ge 0$ an integer and $\frac{2\pi k}{2k+1} < r \le \frac{2\pi (k+1)}{2k+3}$, we have a homotopy equivalence $\vr{S^1}{r}\simeq S^{2k+1}$~\cite{AA-VRS1}; see Figure~\ref{fig:circle-barcodes}.
Therefore $\conn(\vr{S^1}{r})=2k$ for $\frac{2\pi k}{2k+1} < r \le \frac{2\pi (k+1)}{2k+3}$.
The intuition behind the first change in homotopy type is as follows.
For $0 < r \le \frac{2\pi}{3}$, even though the Vietoris--Rips complex $\vr{S^1}{r}$ is of unbounded dimension, it collapses down onto the underlying circle, providing a homotopy equivalence $\vr{S^1}{r}\simeq S^1$.
The first critical scale occurs at $r=\frac{2\pi}{3}$, after which an inscribed equilateral triangle appears to fill in the circle $S^1$.
However, due to the symmetry of the circle, it is actually the case that an entire \emph{circle's worth} of equilateral triangles appears, thus filling in the original $S^1$ in a \emph{circle's worth} of ways.
This yields the $3$-sphere $S^3$ up to homotopy type.
arising via the standard genus one Heegaard decomposition $S^3=S^1\times D^2\cup_{S^1\times S^1}D^2\times S^1$; see Figure~\ref{fig:circle-critical}(left).
Once $r$ exceeds $\frac{4\pi}{5}$ a circle's worth of 4-simplices are attached to $S^3$ to produce $S^5$, and once $r$ exceeds $\frac{6\pi}{7}$ a circle's worth of 6-simplices are glued to $S^5$ to produce the $S^7$, etc.
Inductively, the Vietoris--Rips complex of the circle is homotopy equivalent to a $(k+1)$-fold join of circles, which gives a $(2k+1)$-dimensional sphere:\footnote{
A more accurate description is in fact $\vr{S^1}{r}\simeq S^1 * \tfrac{S^1}{\Z/3} * \tfrac{S^1}{\Z/5} * \ldots * \tfrac{S^1}{\Z/(2k+1)} = S^{2k+1}$; see~\cite{PersistentEquivariantCohomology}.
Indeed, after scale $r=\frac{2\pi k}{2k+1}$ a circle's worth of critical $2k$-dimensional simplices appear.
This circle is best described as $\tfrac{S^1}{\Z/(2k+1)}$  due to the rotational symmetries of $(2k+1)$-evenly spaced points on the circle.
The new homotopy type of the Vietoris--Rips complex of the circle becomes the join of the prior homotopy type with the parameter space $\tfrac{S^1}{\Z/(2k+1)}$ of new critical simplices.} 
\[\vr{S^1}{r}\simeq\underbrace{S^1 * \ldots * S^1}_{k+1} = S^{2k+1}.\]

\begin{figure}[h]
\includegraphics[width=\textwidth]{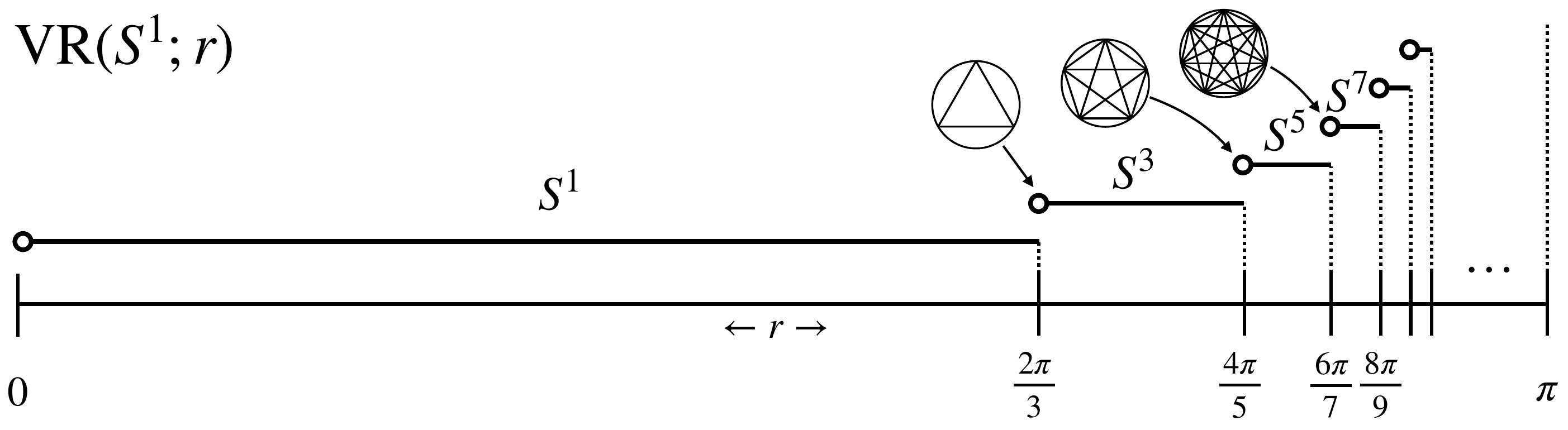}
\vspace{-2mm}
\caption{The homotopy types of $\vr{S^1}{r}$ as $r$ varies from $0$ to $\pi$~\cite{AA-VRS1}.
}
\label{fig:circle-barcodes}
\end{figure}

\begin{figure}[h]
\includegraphics[height=1in]{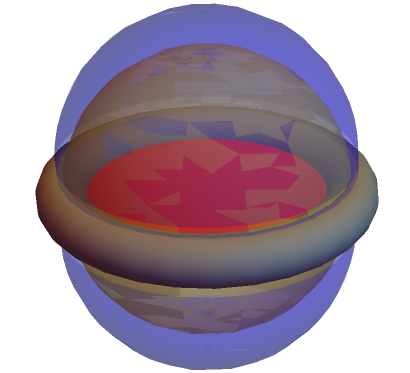}
\hspace{5mm}
\includegraphics[height=0.74in]{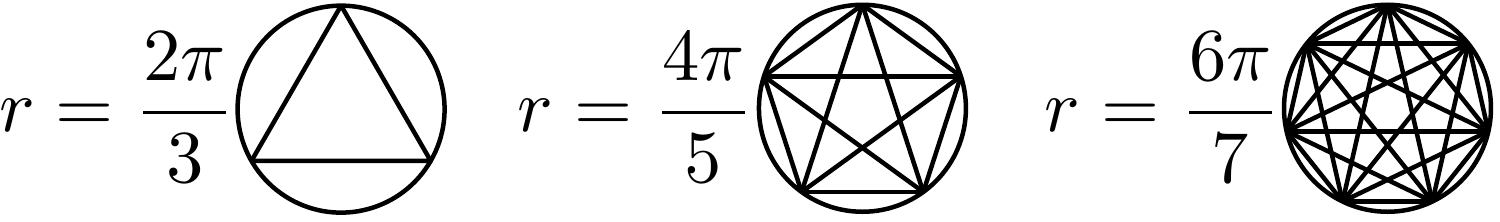}
\caption{
\emph{(Left)} A cartoon of the genus one Heegaard decomposition of the 3-sphere $S^3 = S^1 * S^1$ as the union of two solid tori.
\emph{(Right)} The critical simplices in the case of the circle.
}
\label{fig:circle-critical}
\end{figure}

Though this is the correct intuition for the homotopy types of the Vietoris--Rips complexes of the circle, this is not how the homotopy types were originally proven.
Instead,~\cite{AA-VRS1} proceeded by noting that $\vr{S^1}{r}$ is the union $\cup_{X\subseteq S^1\text{ finite}}\vr{X}{r}$ as $X$ varies over all finite subsets of $S^1$, by understanding Vietoris--Rips complexes of finite subsets of the circle and the maps between them, and then by using colimits and homotopy colimits.
Indeed, it is shown that if $\frac{2\pi k}{2k+1} < r \le \frac{2\pi (k+1)}{2k+3}$, then $\vr{X}{r}\simeq S^{2k+1}$ for all $X\subseteq S^1$ that are sufficiently close to $S^1$ in the Hausdorff distance.
The properties of homotopy colimits~\cite{Hocolim} then imply that $\vr{S^1}{r}$ is also homotopy equivalent to $S^{2k+1}$.
This proof builds upon Adamaszek's paper~\cite{Adamaszek2013} describing the Vietoris--Rips complexes of $n$ evenly-spaced points on the circle and the paper~\cite{AAFPP-J} describing the Vietoris--Rips and \v{C}ech complexes of arbitrary finite subsets of the circle.
See also~\cite{AAR,AAM,lim2023strange}.

Why doesn't the proof of the homotopy types of Vietoris--Rips complexes of the circle in~\cite{AA-VRS1} easily generalize to describe the homotopy types of Vietoris--Rips complexes $\vr{S^n}{r}$ of $n$-spheres for $n\ge 2$?
The proof of~\cite{AA-VRS1} relied on a near-complete understanding of the Vietoris--Rips complexes of finite subsets of the circle.
No such understanding is available for $n\ge 2$, although see~\cite{saleh2024vietoris} for a study of Vietoris--Rips complexes of platonic solids.

\subsection{One-dimensional persistence of geodesic spaces}
\label{ssec:S1dim}
Using knowledge of the Vietoris--Rips complexes of the circle,~\cite{gasparovic2018complete,virk2017approximations,virk20201,virk2022contractions} characterize properties of geodesic loops inside a manifold that are measured by the 1-dimensional persistent homology of the manifold's Vietoris--Rips complexes.
This improves the interpretability of persistent homology by identifying specific geometric features of a manifold that are visible in the persistent homology barcodes.
As a particular example, Corollary~5.8 of~\cite{virk2022contractions} shows that the persistent homology of $\vr{S^1_t}{r}$ appears in the Vietoris--Rips persistent homology of \emph{any} metric graph that is not a forest, where $t$ is the radius of the circle, and where $2\pi t$ is the circumference of the shortest loop in the graph.

Similarly, the \emph{systole} of a manifold (the length of the shortest noncontractible loop) can  be recovered as the smallest non-zero death in the manifold's 1-dimensional Vietoris--Rips persistent homology if the corresponding loop is homologically non-trivial~\cite{virk20201}.
More generally, the systole can always be recovered  as the smallest non-zero death in the manifold's 1-dimensional Vietoris--Rips persistent fundamental group~\cite{virk20201}.
Expanding on these results, if a loop corresponding to the systole is  homologically non-trivial, then under some mild geometric conditions the persistent homology of $\vr{S^1_t}{r}$ (rescaled by the systole) appears in the Vietoris--Rips persistent homology of the manifold~\cite{virk2021footprints}.
If a loop corresponding to the systole is nullhomologous, then similarly mild geometric conditions imply the systole appears as the birth of a two-dimensional bar~\cite{virkSelective}.

\subsection{Hausmann's question}
In~\cite[Problem~3.12]{Hausmann1995}, Hausmann asks if the the connectivity of $\vr{M}{r}$ grows larger until the scale $r$ exceeds the diameter of $M$, when $\vr{M}{r}$ becomes contractible.
This is disproven in~\cite{virk2021counter}, where Virk produces a compact Riemannian manifold $M$ such that $\conn(\vr{M}{r}) > \conn(\vr{M}{r'})$ even though $r<r'$.

Hausmann's theorem that $\vr{M}{r}$ is homotopy equivalent to the manifold $M$ for $r$ sufficiently small compared to the curvature of $M$ is foundational for the subject.
It has been extended in numerous ways, including homology reconstruction, extensions to non-manifold spaces, extensions to subsets of Euclidean space with positive reach, etc.; we refer the reader to~\cite{attali2022optimal,bauer2021gromov,ChazalOudot2008,komendarczyk2024topological,Latschev2001,majhi2022vietoris,majhi2023demystifying,niyogi2008finding,virk2021rips,virk2021counter}.
However, the original proof of Hausmann's theorem is difficult for the following reason: the inclusion $M \hookrightarrow \vr{M}{r}$ mapping each point in the manifold to a vertex in the Vietoris--Rips complex is not continuous, since the vertex set of a simplicial complex is equipped with the discrete topology whereas manifolds are not.
Hausmann produces the homotopy equivalence $\vr{M}{r}\simeq M$ by considering a map $T\colon \vr{M}{r}\to M$ which is not canonically defined, and indeed which depends on choosing a total order of all the points in the manifold $M$ (we contend this is not a natural thing to do).
Nevertheless, through techniques related to Whitehead's theorem, Hausmann proves that this map $T\colon \vr{M}{r}\to M$ is a homotopy equivalence, even without producing a homotopy inverse $M \to \vr{M}{r}$.
We recommend the reader also see Virk's natural proof using the nerve lemma~\cite{virk2021rips}.

In addition to the fact that the inclusion $M \hookrightarrow \vr{M}{r}$ need not be continuous, another indication that the topology on $\vr{M}{r}$ has some disadvantages is that it is not metrizable in general.
Indeed, a simplicial complex is metrizable if and only if it is locally finite, which means that each vertex is contained in only a finite number of simplices~\cite[Proposition~4.2.16(2)]{sakai2013geometric}.
For $M$ a manifold of dimension $n\ge 1$ and for $r>0$, the Vietoris--Rips complex $\vr{M}{r}$ is not metrizable.
This is despite the fact that the input $M$ is a metric space.
So Vietoris--Rips complexes (by design) change categories, from the category of metric spaces to the category of simplicial complexes.

\subsection{Vietoris--Rips metric thickenings}
Let $X$ be a metric space.
In~\cite{AAF}, Adamaszek, Adams, and Frick equip the set $|\vr{X}{r}|$ (the geometric realization of the simplicial complex) with a different topology---in fact a metric.
This new space $\vrm{X}{r}$ is called the \emph{Vietoris--Rips metric thickening}, and the superscrpt `$\mathrm{m}$' stands for `metric'.
Each point $\sum_{i=0}^k \lambda_i x_i \in |\vr{X}{r}|$ in the geometric realization (with the barycentric coordinates $\lambda_i>0$ satisfying $\sum_i \lambda_i=1$ and with $[x_0,\ldots,x_k]\in\vr{X}{r}$)  is now instead thought of as a probability measure $\sum \lambda_i \delta_{x_i}$, where each $\delta_{x_i}$ is a Dirac delta mass.
The metric thickening $\vrm{X}{r}$ is equipped with a $q$-Wasserstein (or Monge--Kantarovich--Rubenstein~\cite{vershik2013long,villani2003topics,villani2008optimal}) distance for $1\le q<\infty$.
The inclusion $X \hookrightarrow \vrm{X}{r}$ is now continuous in general.
Furthermore, $X \hookrightarrow \vrm{X}{r}$ is an isometry onto its image, and the Hausdorff distance between $X$ and $\vrm{X}{r}$ is at most $r$, meaning that $\vrm{X}{r}$ is an \emph{$r$-thickening of $X$} in the sense of Gromov~\cite{Gromov}.
Finally,~\cite[Theorem~4.2]{AAF} gives a version of Hausmann's theorem with a natural proof: the map $g\colon \vrm{M}{r}\to M$ from the Vietoris--Rips thickening of a manifold to the manifold, defined by mapping a probability measure to its Frech\'{e}t mean for $r$ sufficiently small compared to the curvature of $M$, is a homotopy equivalence, with the (now continuous) inclusion $M \hookrightarrow \vrm{M}{r}$ as its homotopy inverse.

The homotopy types of Vietoris--Rips simplicial complexes and Vietoris--Rips metric thickenings are known to be tightly connected.
This was proven in a series of recent papers producing stronger and stronger results.
If $X$ is finite then $\vr{X}{r}$ and $\vrm{X}{r}$ are homeomorphic (\cite[Proposition~6.2]{AAF}; see also~\cite[Equation~1.B(c)]{Gromov}).
Adams, Moy, M\'{e}moli, and Wang proved that for $X$ totally bounded, the persistent homology modules of $\vr{X}{r}$ and $\vrm{X}{r}$ are $\varepsilon$-interleaved for any $\varepsilon>0$~\cite{AMMW,MoyMasters}, and hence they have the same persistence diagrams.
We now know that the persistence modules for $\vr{X}{r}$ and $\vrm{X}{r}$ are furthermore isomorphic (we emphasize this is with the $<$ convention).
Indeed, Adams, Virk, and Frick proved that if $X$ is totally bounded, then $\vr{X}{r}$ and $\vrm{X}{r}$ have isomorphic homotopy groups~\cite{HA-FF-ZV}, but they do not show that they are weakly homotopy equivalent (they do not find a single map inducing these isomorphism).
Gillespie proved that they have isomorphic homology groups~\cite{gillespie2022homological}.
This sequence of results was notably strengthened in~\cite{gillespie2023vietoris}, when Gillespie proved that the natural continuous map $|\vr{X}{r}|\to\vrm{X}{r}$ defined by $\sum_i \lambda_i x_i \mapsto \sum_i \lambda_i \delta_{x_i}$ induces an isomorphism on all homotopy groups, and hence is a weak homotopy equivalence.
This implies that the persistent homology modules of $\vr{X}{r}$ and $\vrm{X}{r}$ are not only $\varepsilon$-interleaved for any $\varepsilon>0$, but furthermore isomorphic on the nose.

\subsection{Vietoris--Rips metric thickenings of spheres}
\label{ss:vrmSn}
What are the Vietoris--Rips metric thickenings of spheres?
The paper~\cite{AAF} identifies the first new homotopy type of Vietoris--Rips metric thickenings of the $n$-sphere for all $n$.
Indeed, let $r_n\coloneqq \arccos(\frac{-1}{n+1})$ be the diameter of the vertex set of a regular $(n+1)$-simplex inscribed in $S^n$.
We have $r_1=\frac{2\pi}{3}$ as shown in Figure~\ref{fig:circle-critical}, and Figure~\ref{fig:sphere-critical}(right) shows two inscribed regular tetrahedra in $S^2$ that each have diameter $r_2$.
Proposition~5.3 and Theorem~5.4 of~\cite{AAF} show that $\vrm{S^n}{r}$ is homotopy equivalent to $S^n$ for all $r<r_n$, and furthermore identifies the first new homotopy type $\vrmleq{S^n}{r_n} \simeq S^n * \frac{\so(n+1)}{A_{n+2}}$.
Here, the special orthogonal group $\so(n+1)$ is the set of all rotations of $\R^{n+1}$, and the alternating group $A_{n+2}$ encodes the rotational symmetries of a regular $(n+1)$-simplex inscribed in $S^n$.
So, the quotient group $\frac{\so(n+1)}{A_{n+2}}$ is the parameter space of all critical simplices (inscribed regular $(n+1)$-simplices) at scale $r_n$, and joining this parameter space with the prior homotopy type $S^n$ gives the first new homotopy type $\vrmleq{S^n}{r_n} \simeq S^n * \frac{\so(n+1)}{A_{n+2}}$.
We note that since $S^n$ is $(n-1)$-connected and $\frac{\so(n+1)}{A_{n+2}}$ is 0-connected, their join $S^n * \frac{\so(n+1)}{A_{n+2}}$ is $(n+1)$-connected
~\cite[Proposition~4.4.3]{matousek2003using}.
But what are the homotopy types of Vietoris--Rips metric thickenings of spheres at scales $r>r_n$?

\begin{figure}[htb]
\includegraphics[height=1in]{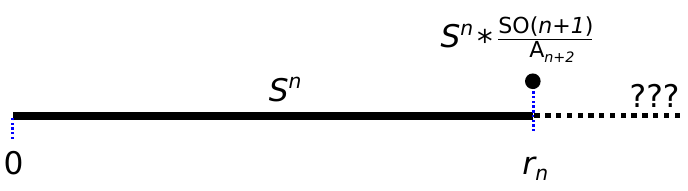}
\hspace{5mm}
\includegraphics[height=1.1in]{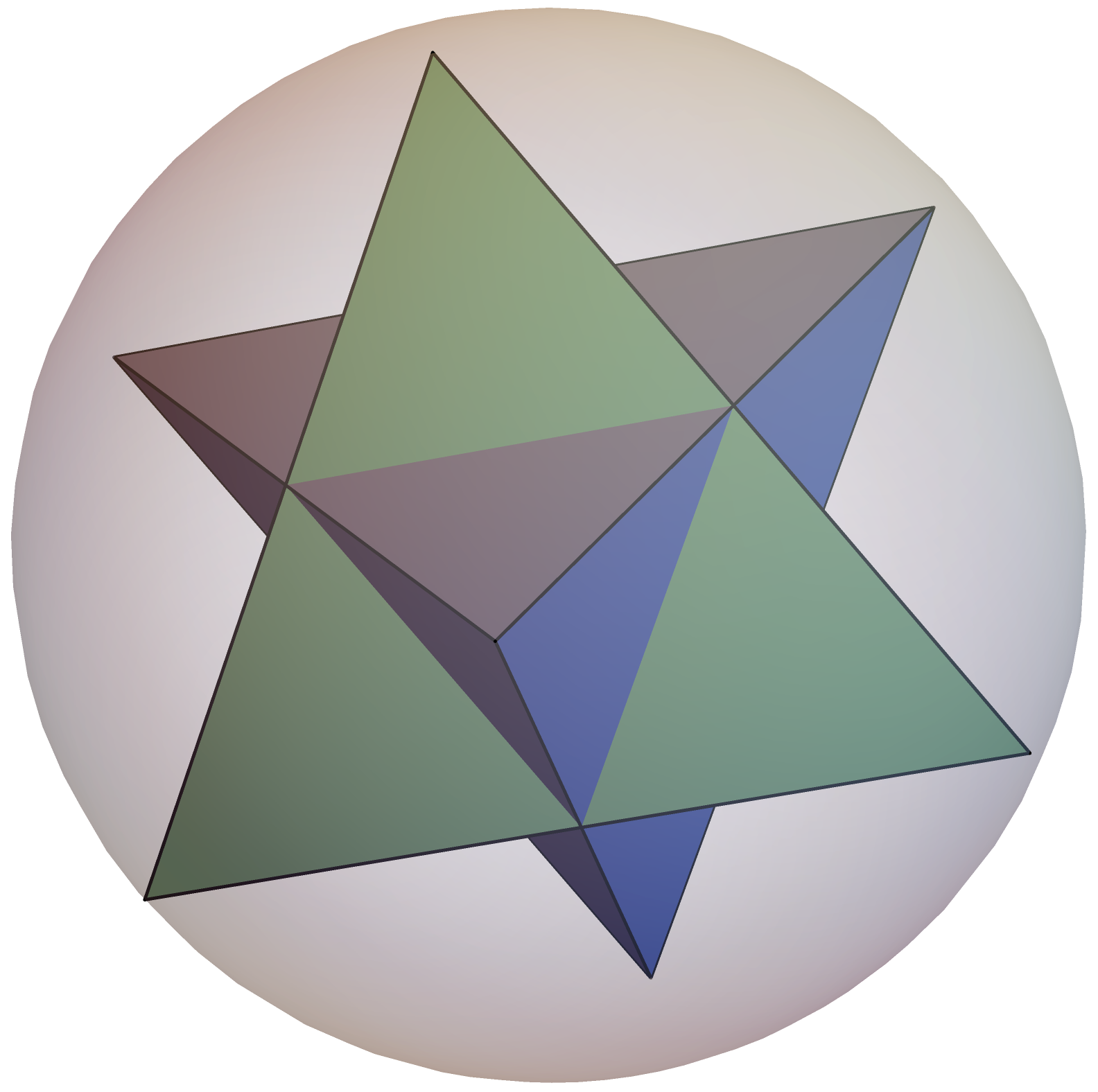}
\hspace{5mm}
\vspace{-2mm}
\caption{
\emph{(Left)}
The homotopy types of the Vietoris--Rips metric thickening $\vrm{S^n}{r}$ as $r$ increases~\cite{AAF}.
\emph{(Right)}
Two critical tetrahedra in $S^2$.}
\label{fig:sphere-critical}
\end{figure}

Another significant result is Moy's proof of the homotopy types of the Vietoris--Rips metric thickenings of the circle: for $\frac{2\pi k}{2k+1} \le r < \frac{2\pi (k+1)}{2k+3}$, we have a homotopy equivalence $\vrmleq{S^1}{r}\simeq S^{2k+1}$~\cite{moyVRmS1}.
Moy shows how to collapse the infinite-dimensional space $\vrmleq{S^1}{r}$ down to a finite CW complex.
This proof is more geometric than the one in~\cite{AA-VRS1}, and many of Moy's tools in~\cite{moyVRmS1} (including \emph{support homotopies}, for example) may be useful towards understanding the homotopy types of Vietoris--Rimps metric thickenings of spheres.
For these reasons, we view Moy's geometric proof in~\cite{moyVRmS1} as an improvement on the combinatorial proof in~\cite{AA-VRS1}.
Further improvements seem possible.
For example in the forthcoming work~\cite{PersistentEquivariantCohomology}, the authors construct an $S^1$-equivariant homotopy equivalence $\vrmleq{S^1}{r}\simeq S^{2k+1}$ for $\frac{2\pi k}{2k+1} \le r < \frac{2\pi (k+1)}{2k+3}$ (which is needed in order to study the persistent equivariant cohomology of the $S^1$ action on $\vrmleq{S^1}{r}$).

Realizing that determining exact homotopy types is a difficult problem, in~\cite{ABF2}, Adams, Bush, and Frick provide upper and lower bounds on the $\Z/2$ equivariant topology of $\vrm{S^n}{r}$ in terms of the covering and packing numbers of projective space.
In~\cite[Theorem~3]{ABF2} they show that if $\delta\ge 2\cdot \cov_{\RP^n}(k)$, then there is a continuous odd map $\vrm{S^n}{\pi-\delta}\to S^{k-1}$.
Therefore, by composition and the Borsuk--Ulam theorem~\cite{Matouvsek2008}, there cannot exist a continuous odd map $S^k \to \vrm{S^n}{\pi-\delta}$.
(In other words, the Vietoris--Rips metric thickening satisfies $\coind(\vrm{S^n}{\pi-\delta}) \le k-1$, where $\coind$ stands for the \emph{$\Z/2$-coindex}.)
On the other hand~\cite[Theorem~2]{ABF2}, shows that if $\delta\le \pack_{\RP^n}(k+1)$, meaning there are $k+1$ points in $\RP^n$ at pairwise distance at least $\delta$ apart, then there is a continuous odd map $S^k\to\vrm{S^n}{\pi-\delta}$ (meaning $k\le \coind(\vrm{S^n}{\pi-\delta})$).
Analogous bounds also apply to Vietoris--Rips simplicial complexes, and they were used in~\cite{GH-BU-VR} in order to provide new lower bounds on Gromov--Hausdorff distances between spheres.

\subsection{Borsuk--Ulam theorems}
Vietoris--Rips complexes of spheres have been used to provide generalizations of the Borsuk--Ulam theorems for maps $S^n \to \R^k$ with $k>n$; see~\cite{ABF,ABF2,BushMasters,BushThesis,crabb2023borsuk} and~\cite{akopyan2012borsuk,malyutin2023neighboring} for related results.
An improved understanding of the homotopy types of Vietoris--Rips complexes of spheres would enable improved Borsuk--Ulam theorems.

\subsection{Bestvina--Brady Morse theory}
In~\cite{zaremsky2022bestvina}, Zaremsky shows how to apply Bestvina--Brady Morse theory to Vietoris--Rips complexes of spheres at small scale parameters.
Bestvina--Brady Morse theory~\cite{bestvina1997morse} is a strong version of discrete Morse theory that predates Forman's discrete Morse theory~\cite{forman2002user} and that applies to infinite-dimensional simplicial complexes such as $\vr{S^n}{r}$.


\subsection{The Kuratowski embedding and the filling radius}

The \emph{filling radius} of a manifold $M$ was used by Gromov to prove the systolic inequality, which provides a lower bound for the volume of $M$ in terms of the systole (the length of the shortest non-contractible loop), and which enables a proof of the ``waist of the sphere" theorem~\cite{gromov1983filling,gromov2007metric}.
In subsequent work, Katz determined the filling radius of spheres and projective spaces~\cite{katz1983filling,katz1989diameter,katz9filling,katz1991neighborhoods}.
Work by Lim, M\'{e}moli, and Okutan~\cite{lim2022vietoris,okutan2019persistence} connects Vietoris--Rips complexes to quantitative topology: if $M$ is a manifold, then the top-dimensional bar in the Vietoris--Rips persistent homology dies at the filling radius.
On a similar note, recall from Section~\ref{ssec:S1dim} that the Vietoris--Rips persistent homology often captures the systole.

\subsection{The need for a Morse theory and a Morse--Bott theory}

There is a need to develop a stronger Morse theory~\cite{milnor1963morse} to describe the homotopy types of Vietoris--Rips complexes or thickenings of manifolds.
Critical points will correspond to configurations which can't be perturbed in order to reduce their diameter.
For symmetric spaces such as spheres where the critical simplices are not isolated, a Morse--Bott theory may be needed; Morse--Bott theory is a generalization of Morse theory that allows one to have entire submanifolds worth of critical points, instead of requiring that each critical point is isolated~\cite{bott1954nondegenerate,bott1982lectures,bott1988morse}.

There are at least three different settings in which such Morse theories could be developed:
\begin{itemize}
\item[(i)] Vietoris--Rips simplicial complexes $\vr{X}{r}$.
These have the disadvantage that $X\hookrightarrow \vr{X}{r}$ is not continuous if $X$ is not discrete, since the vertex set of a simplicial complex is equipped with the discrete topology.

\item[(ii)] Neighborhoods of the Kuratowski embedding, which are homotopy equivalent to Vietoris--Rips complexes by~\cite{lim2022vietoris,okutan2019persistence}.

\item[(iii)] Vietoris--Rips metric thickenings $\vrm{X}{r}$, as introduced in~\cite{AAF}.
The natural inclusion $X\hookrightarrow\vrm{X}{r}$ is now continuous (an isometry onto its image).
\end{itemize}

For (i), a Bestvina--Brady Morse theory for Vietoris--Rips complexes~\cite{zaremsky2022bestvina} has so far only been able to show $\vr{S^n}{r}\simeq S^n$ for $r$ sufficiently small, but not yet to identify any changes in homotopy type as $r$ increases.
See~\cite{gorivcan2023critical} for an analysis of critical edges in Vietoris--Rips complexes.
For (ii), three decades ago Katz developed a Morse theory for neighborhoods of the Kuratowski embedding~\cite{katz1983filling,katz1989diameter,katz1991neighborhoods}.
So far, this Morse theory is only strong enough to determine one new homotopy type of $\vr{S^1}{r}$ (revealing $S^3$ but none of $S^5, S^7, S^9, \ldots$), one new homotopy type of $\vr{S^2}{r}$, and no new homotopy types of $\vr{S^n}{r}$ for $n>2$.
For (iii), by contrast, the theory of metric thickenings is powerful enough to identify the first new homotopy type $\vrmleq{S^n}{r_n}\simeq S^n * \tfrac{\so(n+1)}{A_{n+2}}$ that occurs for all $n$, where $r_n$ is the diameter of a regular $(n+1)$-simplex inscribed in $S^n$.
A Morse theory for metric thickenings might build upon~\cite{AMMW,MirthThesis}.

Improved Morse or Morse-Bott theories for the settings (i), (ii), or (iii) would provide general tools for determining homotopy types of Vietoris--Rips complexes of manifolds, instead of having to invent new tools for each new space (which is currently the state of the art).
A satisfactory Morse theory ought to have both a first Morse lemma describing how $\vr{M}{r}\hookrightarrow \vr{M}{r'}$ or $\vrm{M}{r}\hookrightarrow \vrm{M}{r'}$ is a homotopy equivalence if there are no critical scales in $[r,r']$ as well as a second Morse lemma describing how the homotopy type changes upon passing a critical scale.
A satisfactory Morse theory ought to be able to handle the following examples.

\begin{example}[The circle $S^1$]
The first Morse lemma will say $\vrmleq{S^1}{r}\simeq S^1$ for $0\le r<\frac{2\pi}{3}$.
The second Morse lemma will say $\vrmleq{S^1}{\frac{2\pi}{3}}$ is obtained from $\vrm{S^1}{\frac{2\pi}{3}-\varepsilon}\simeq S^1$ by gluing on a circle's worth of 2-dimensional balls (simplices) $D^2$ along their boundaries via an attaching map $h$, yielding $\vrmleq{S^1}{\frac{2\pi}{3}}\simeq S^1\cup_h(D^2 \times S^1)\simeq S^1*S^1 = S^3$; see Figure~\ref{fig:circle-critical}(left).
Inductively, the first Morse lemma will say $\vrmleq{S^1}{r}\simeq S^{2k-1}$ for $\frac{2\pi (k-1)}{2k-1}<r<\frac{2\pi k}{2k+1}$.
Inductively, the second Morse lemma will say $\vrmleq{S^1}{\frac{2\pi k}{2k+1}}$ is obtained from $\vrm{S^1}{\frac{2\pi k}{2k+1}-\varepsilon}\simeq S^{2k-1}$ by gluing on a circle's worth of $2k$-dimensional balls (simplices) $D^{2k}$ along their boundaries via an attaching map $h$, yielding $\vrmleq{S^1}{\frac{2\pi k}{2k+1}}\simeq S^{2k-1}\cup_h(D^{2k} \times S^1)\simeq S^{2k-1}*S^1 = S^{2k+1}$~\cite{AA-VRS1,moyVRmS1}.
\end{example}

\begin{example}[The $n$-sphere $S^n$]
Let $r_n=\arccos(\frac{-1}{n+1})$ be the diameter of an inscribed regular $(n+1)$-dimensional simplex in $S^n$.
The first Morse lemma will say $\vrmleq{S^n}{r}\simeq S^n$ for $0\le r<r_n$.
The second Morse lemma will say $\vrmleq{S^n}{r_n}$ is obtained from $\vrm{S^1}{r_n-\varepsilon}\simeq S^n$ by gluing on a $\frac{\so(n+1)}{A_{n+2}}$ parameter space's worth of $(n+1)$-dimensional balls (simplices) $D^{n+1}$ along their boundaries via an attaching map $h$, yielding $\vrmleq{S^n}{r_n}\simeq S^n\cup_h(D^{n+1} \times \frac{\so(n+1)}{A_{n+2}})\simeq S^n * \frac{\so(n+1)}{A_{n+2}}$~\cite{AAF}; see Figure~\ref{fig:sphere-critical}.
\end{example}

\begin{example}
[Ellipses of small eccentricity]
\label{ex:ellipse}
An ellipse of course breaks the circular symmetry.
As we explain, the homotopy type of the Vietoris--Rips complexes of an ellipse begins at $S^1$, becomes $S^2$ once the first two inscribed equilateral triangles appear~\cite{AAR}, and only becomes $S^3$ once the entire circle's worth of inscribed equilateral triangles appear.

\begin{figure}[h]
\includegraphics[height=1.1in]{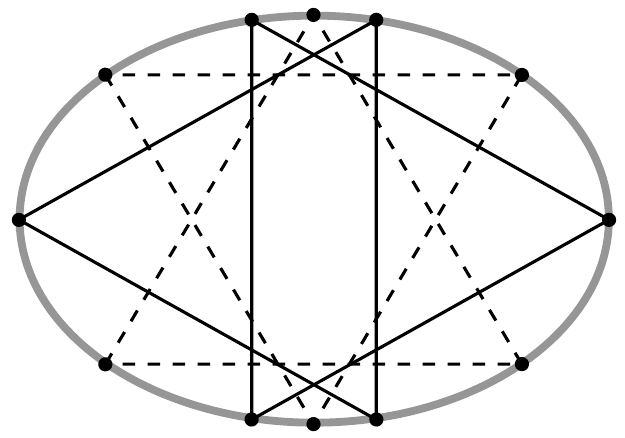}
\caption{The inscribed equilateral triangles with solid lines have diameter $r'$, and the inscribed equilateral triangles with dashed lines have diameter $r''$.}
\label{fig:ellipse}
\end{figure}

In more detail, let $E$ be an ellipse $\{(x,y)\in \R^2~|~(x/a)^2+y^2=1\}$ with $1<a\le\sqrt{2}$, with the Euclidean metric.
Let $r'=\frac{4\sqrt{3}a}{a^2+3}$ and $r''=\frac{4\sqrt{3}a^2}{3a^2+1}$ be the side-lengths of the smallest and largest inscribed equilateral triangles; see Figure~\ref{fig:ellipse}.
The first Morse lemma will say $\vrmleq{E}{r}\simeq E=S^1$ for $0\le r<r'$.
The second Morse lemma will say $\vrmleq{E}{r'}$ is obtained from $\vrm{E}{r'-\varepsilon}\simeq S^1$ by gluing on two $2$-dimensional balls (simplices) $D^2$ along their boundaries via an attaching map $h$, yielding $\vrmleq{E}{r'}\simeq S^1\cup_h(D^2 \times S^0)\simeq S^2$.
The first Morse lemma will say $\vrmleq{E}{r}\simeq S^2$ for $r'\le r<r''$~\cite{AAR}.
The second Morse lemma will say $\vrmleq{E}{r''}$ is obtained from $\vrm{E}{r''-\varepsilon}\simeq S^2$ by gluing on two $3$-dimensional balls $D^3$ along their boundaries via an attaching map $h$, yielding $\vrmleq{E}{r''}\simeq S^2\cup_h(D^3 \times S^0)\simeq S^3$.
\end{example}

\section{Notation and conventions}
\label{sec:notation} 

For $(X,d)$ a metric space and $x\in X$, we define the open and closed balls of radius $r$ by $B(x;r)\coloneqq \{y\in X~|~ d(x,y)< r\}$ and $B[x;r]\coloneqq \{y\in X~|~ d(x,y)\leq r\}$, respectively.
When needed, the ambient space will be denoted in the subscript, for example as $B_X(x;r)$.
For $k\geq 1$, we define the \emph{$k$-th covering radius of $X$} to be
\[\cov_X(k)\coloneqq \inf \{r\geq 0~|~\exists x_1,\dots, x_k\in X \text{ with 
} \cup_{i=1}^k B[x_i;r]\} = X. \]
For $\delta >0$, we define the \emph{covering number} of $X$ to be
\[\numCover_X(\delta)\coloneqq \min \{n\geq 1~|~ \exists x_1,\dots, x_n\in X\text{ with } \cup_{i=1}^n B[x_i;r]\} = X.\]

\begin{lemma}
\label{lem:cov-inf}
For a compact metric space $(X,d)$ and $k\geq 1$, the infimum in the definition of the $k$-th covering radius $\cov_X(k)$ is attained: there exist $x_1,\dots, x_k\in X$ with $\cup_{i=1}^k B[x_i;\cov_X(k)] = X$. 
\end{lemma}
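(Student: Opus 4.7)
The plan is a standard compactness argument: use the definition of infimum to extract near-optimal covers, pass to a subsequential limit by compactness of $X^k$, and verify by contradiction that the limiting centers give an exact cover at the limit radius.

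Let $r^{*}=\cov_X(k)$. By definition of the infimum, for every integer $n\ge 1$ there exist points $x_1^{(n)},\dots,x_k^{(n)}\in X$ such that
\[
\bigcup_{i=1}^{k} B\!\left[x_i^{(n)};\,r^{*}+\tfrac{1}{n}\right]\;=\;X.
\]
The product $X^{k}$ is compact (by Tychonoff, or just by sequential compactness of finite products of compact metric spaces), so after passing to a subsequence we may assume $x_i^{(n)}\to x_i\in X$ as $n\to\infty$ for each $i=1,\dots,k$. The claim is that $\bigcup_{i=1}^{k} B[x_i;\,r^{*}]=X$, which will establish that the infimum is attained by the points $x_1,\dots,x_k$.

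The key step is to verify this cover, which I would do by contradiction. Suppose there is a point $y\in X$ with $d(y,x_i)>r^{*}$ for all $i$. Set $\eta\coloneqq \min_{i}\bigl(d(y,x_i)-r^{*}\bigr)>0$. Choose $n$ large enough that $1/n<\eta/2$ and $d(x_i,x_i^{(n)})<\eta/2$ for every $i$. Then, by the triangle inequality,
\[
d(y,x_i^{(n)})\;\ge\;d(y,x_i)-d(x_i,x_i^{(n)})\;>\;(r^{*}+\eta)-\tfrac{\eta}{2}\;=\;r^{*}+\tfrac{\eta}{2}\;>\;r^{*}+\tfrac{1}{n}
\]
for every $i$, contradicting the fact that $y$ lies in $\bigcup_{i=1}^{k} B[x_i^{(n)};\,r^{*}+1/n]=X$. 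Hence no such $y$ exists.

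There is no real obstacle here; the only point requiring a pinch of care is that we are working with closed balls on both sides, so the strict inequality $d(y,x_i)>r^{*}$ in the contradiction hypothesis is essential to obtain the gap $\eta>0$. Compactness of $X$ is used exactly once, to pass to a convergent subsequence of centers; continuity of the distance function then transfers the near-cover property at radius $r^{*}+1/n$ to an exact cover at radius $r^{*}$.
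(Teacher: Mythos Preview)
Your proof is correct. It differs in execution from the paper's argument, though both ultimately rest on the compactness of $X^k$. The paper packages the argument by introducing the continuous function
\[
f\colon X^k\to\R,\qquad (x_1,\ldots,x_k)\mapsto \max_{x\in X} d\bigl(x,\{x_1,\ldots,x_k\}\bigr),
\]
and then invokes the finite intersection property on the nested closed nonempty sublevel sets $f^{-1}((-\infty,r])$ for $r>\cov_X(k)$; any point in the intersection is the desired tuple of centers. Your approach is the sequential version: extract near-optimal covers, pass to a subsequential limit, and verify the limit works by a direct $\eta/2$ triangle-inequality contradiction. The paper's route is slightly slicker once one grants the continuity of $f$ (which itself uses compactness of $X$), whereas your argument is more hands-on and avoids needing to check that continuity. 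Both are entirely standard and equally valid here.
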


\begin{proof}
Let $f \colon X^k \to \R$ be the continuous function defined via $(x_1, \ldots, x_k) \mapsto \max_{x\in X} d(x,\{x_1, ..., x_k\})$.
The sublevelsets $f^{-1}((-\infty,r])$ are closed and nonempty for $r>\cov_X(k)$.
Since $X^k$ is compact, the intersection of these sublevelsets over all $r>\cov_X(k)$ must also be nonempty by the finite intersection property.
Any element of this intersection is a tuple $(x_1, \ldots, x_k)$ with $d(x,\{x_1, ..., x_k\}) \le \cov_X(k)$ for all $x \in X$, giving $\cup_{i=1}^k B[x_i;\cov_X(k)] = X$.

\end{proof}

For $X$ a metric space and $r>0$, the \emph{Vietoris--Rips simplicial complex $\vr{X}{r}$} has vertex set $X$, and a finite set $\sigma\subseteq X$ as a simplex if its diameter $\diam(\sigma)$ is less than $r$.
We often identify a simplicial complex with its geometric realization.

Throughout this paper we equip the $n$-sphere $S^n$ with the geodesic metric.
So $S^n$ has diameter $\pi$.
However, all of our results hold if one instead prefers to use the Euclidean metric on $S^n$, after a (monotonic) change in scale parameter.

\section{Proof of Theorem~\ref{thm:main}}
\label{sec:proof}


Let $n\ge 1$ and $\delta>0$.
To prove our main result, Theorem~\ref{thm:main}, we must show that if $\conn(\vr{S^n}{\pi-\delta})=k-1$, then
\[\cov_{S^n}(2k+2) \le \delta < 2\cdot \cov_{\RP^n}(k).\]
We split the proof into two claims, from which the proof quickly follows.
Claim~\ref{claim:lower} gives a lower bound on the connectivity of $\conn(\vr{S^n}{\pi-\delta})$, and Claim~\ref{claim:upper} gives an upper bound.

\begin{claim}
\label{claim:lower}
If $\delta < \cov_{S^n}(2k+2)$ then $\conn(\vr{S^n}{\pi-\delta})\ge k$.
\end{claim}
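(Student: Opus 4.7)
The plan is to apply a flag complex connectivity theorem due to Meshulam, Kahle, and Barmak~\cite{meshulam2001clique,Kahle2009,farber2023large}: a flag simplicial complex is $k$-connected as soon as every set of at most $2k+2$ of its vertices admits a common neighbor in the $1$-skeleton. Since $\vr{S^n}{\pi-\delta}$ is by construction a flag complex with vertex set $S^n$, the entire proof reduces to verifying this common-neighbor hypothesis from the assumption $\delta < \cov_{S^n}(2k+2)$.

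The key geometric observation I would use is that the geodesic sphere of diameter $\pi$ satisfies $d(x,p)+d(x,-p)=\pi$ for every $x,p\in S^n$. Hence $x$ is adjacent to $p$ in $\vr{S^n}{\pi-\delta}$, i.e.\ $d(x,p)<\pi-\delta$, if and only if $d(x,-p)>\delta$, i.e.\ $x\notin B[-p;\delta]$. Consequently, $x$ is a common neighbor of $\{p_1,\ldots,p_j\}\subseteq S^n$ precisely when $x\notin\bigcup_{i=1}^{j}B[-p_i;\delta]$; producing a common neighbor is the same as exhibiting a point of $S^n$ not covered by the closed $\delta$-balls centered at the antipodes $-p_1,\ldots,-p_j$.

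Next I would invoke monotonicity: $\cov_{S^n}(j)\ge\cov_{S^n}(2k+2)$ whenever $j\le 2k+2$, since any cover by $j$ balls of some radius is also a cover by $2k+2$ balls of the same radius. Combined with $\delta<\cov_{S^n}(2k+2)$, this gives $\delta<\cov_{S^n}(j)$ for every $j\le 2k+2$, so no $j$ closed balls of radius $\delta$ can cover $S^n$. In particular $\bigcup_{i=1}^{j}B[-p_i;\delta]\ne S^n$, and a common neighbor of $\{p_1,\ldots,p_j\}$ exists in $S^n$. Applying the Meshulam--Kahle--Barmak theorem to $\vr{S^n}{\pi-\delta}$ then yields $\conn(\vr{S^n}{\pi-\delta})\ge k$.

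The main obstacle, I expect, is not the geometric content (the antipode-ball rewriting is elementary and monotonicity of $\cov_{S^n}$ is immediate) but rather invoking the correct form of the flag complex connectivity theorem, since the literature contains variants with different numerical factors relating the size of vertex subsets to the resulting connectivity. I would align carefully with the version in Barmak's appendix to~\cite{farber2023large} or the formulation used in~\cite{bendersky2023connectivity}, which the paper credits with bringing these techniques to the authors' attention.
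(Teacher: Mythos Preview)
Your proposal is correct and follows essentially the same approach as the paper: both use Barmak's $(2k+2)$-conic criterion (equivalently, your ``every $\le 2k+2$ vertices have a common neighbor'' formulation for flag complexes), and both verify the hypothesis via the antipodal identity $B_{S^n}(p;\pi-\delta)=S^n\setminus B_{S^n}[-p;\delta]$, reducing the existence of a common neighbor to the statement that $2k+2$ closed $\delta$-balls cannot cover $S^n$. The paper packages the intermediate step as a separate corollary (``if any $2k+2$ open $r$-balls in $X$ intersect, then $\conn(\vr{X}{r})\ge k$''), but the content is identical to yours.
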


\begin{claim}
\label{claim:upper}
If $\delta \ge 2\cdot \cov_{\RP^n}(k)$ then $\conn(\vr{S^n}{\pi-\delta})< k-1$.
\end{claim}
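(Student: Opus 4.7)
The plan is to prove the contrapositive: assuming $\delta \ge 2\cdot\cov_{\RP^n}(k)$, I would show that if $\vr{S^n}{\pi-\delta}$ were $(k-1)$-connected, then one could produce a $\Z/2$-equivariant (odd) map $S^k \to S^{k-1}$, contradicting the Borsuk--Ulam theorem.

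The first preliminary step is to endow $\vr{S^n}{\pi-\delta}$ with a free $\Z/2$-action. The antipodal map on $S^n$ acts vertex-wise on the complex, and freeness on the geometric realization follows because no simplex can be setwise fixed: a fixed simplex would have to contain an antipodal pair $\{x,-x\}$, but $d(x,-x)=\pi \not< \pi-\delta$, so no such simplex exists. The second preliminary step is to invoke~\cite[Theorem~3]{ABF2}, which under the hypothesis $\delta \ge 2\cdot\cov_{\RP^n}(k)$ provides a continuous odd map $f\colon \vrm{S^n}{\pi-\delta} \to S^{k-1}$ (with $S^{k-1}$ carrying its antipodal action). The same conclusion applies to $\vr{S^n}{\pi-\delta}$, either via the same construction or via the equivariant weak homotopy equivalence $|\vr{S^n}{\pi-\delta}|\to\vrm{S^n}{\pi-\delta}$ from~\cite{gillespie2023vietoris} (see also~\cite{HA-FF-ZV}).

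Now suppose for contradiction that $\conn(\vr{S^n}{\pi-\delta}) \ge k-1$. I would invoke the standard fact from equivariant topology that every $(k-1)$-connected free $\Z/2$-space $Y$ has $\Z/2$-coindex at least $k$: equipping $S^k$ with its standard free $\Z/2$-CW structure, one builds an odd map $g\colon S^k \to Y$ by extending equivariantly over successive skeleta, the obstruction to extending across an $i$-cell lying in $\pi_{i-1}(Y)=0$ for $i\le k$. Composing gives an odd map $f\circ g\colon S^k\to S^{k-1}$, contradicting Borsuk--Ulam; hence $\conn(\vr{S^n}{\pi-\delta})<k-1$.

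The main subtlety is that $\vr{S^n}{\pi-\delta}$ is not a locally finite CW-complex and hence not as pleasant for obstruction arguments; the cleanest workaround is to argue on $\vrm{S^n}{\pi-\delta}$ (which is metrizable, shares the same free $\Z/2$-action, and has the same weak homotopy type by~\cite{gillespie2023vietoris}) and then transfer the connectivity bound back to $\vr{S^n}{\pi-\delta}$. Everything else is a routine packaging of the ``coindex $\ge$ connectivity $+\,1$ for free $\Z/2$-spaces'' principle together with the key geometric input supplied by~\cite{ABF2}.
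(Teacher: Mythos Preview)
Your proposal is correct and matches the paper's approach: construct an odd continuous map $\vr{S^n}{\pi-\delta}\to S^{k-1}$ and then invoke the Borsuk--Ulam consequence (\cite[Proposition~5.3.2(iv)]{matousek2003using}) that a $(k-1)$-connected free $\Z/2$-space admits no odd map to $S^{k-1}$. The paper takes your first option---it explicitly re-runs the construction of~\cite[Theorem~3]{ABF2} directly on the simplicial complex rather than citing it as a black box---and your worry about obstruction theory on a non-locally-finite target is unnecessary, since the CW structure used in building the odd map $S^k\to Y$ lives on the domain $S^k$, not on $Y$.
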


\begin{proof}[Proof of Theorem~\ref{thm:main}]
We need to prove that if $\conn(\vr{S^n}{\pi-\delta})=k-1$, then
\[\cov_{S^n}(2k+2) \le \delta < 2\cdot \cov_{\RP^n}(k).\]
Since $\conn(\vr{S^n}{\pi-\delta})\le k-1$, by Claim~\ref{claim:lower} it must be that $\cov_{S^n}(2k+2) \le \delta$.
And since $\conn(\vr{S^n}{\pi-\delta})\ge k-1$, by Claim~\ref{claim:upper} it must be that $\delta < 2\cdot \cov_{\RP^n}(k)$.
\end{proof}

It remains to prove Claims~\ref{claim:lower} and~\ref{claim:upper}, which we do in Sections~\ref{ssec:lower} and~\ref{ssec:upper}.
That will complete the proof of Theorem~\ref{thm:main}.
Afterwards, in Section~\ref{ssec:corollary}, we prove Corollaries~\ref{cor:main1} and~\ref{cor:main2}.

\subsection{Lower bound on connectivity: Proving Claim~\ref{claim:lower}}
\label{ssec:lower}

Our original proof relied on Proposition~3.1 of Meshulam's 2001 paper~\cite{meshulam2001clique}.
However, we later developed a shorter proof based on the results in an appendix by Barmak in a 2023 paper by Farber~\cite{farber2023large}.
Theorem~3.1 in the 2009 paper by Kahle~\cite{Kahle2009} could also be used.
We refer the reader to~\cite{aharoni2000hall,barmak2023connectivity,bendersky2023connectivity,Bjorner1995,chudnovsky2000systems,meshulam2003domination} for related results.

Following Barmak's appendix~\cite{farber2023large}, a simplicial complex $K$ is \emph{$\ell$-conic} if each subcomplex with at most $\ell$ vertices is contained in a simplicial cone, or equivalently, in the closed star $st_K(v)$ of some vertex $v\in K$.
Barmak proves that the following result.

\begin{theorem}[Theorem~4 of~\cite{farber2023large}]
If a simplicial complex $K$ is $(2k+2)$-conic for $k\ge 0$, then it is $k$-connected.
\end{theorem}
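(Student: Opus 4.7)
The plan is to induct on $k$. For the base case $k=0$, the $2$-conic hypothesis says that for any two vertices $u,v\in K$ there is a vertex $w$ with $\{u,v\}\subseteq\st_K(w)$, so $u\,w\,v$ is a path and $K$ is path-connected.

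For the inductive step, I would first observe that if $K$ is $(2k+2)$-conic then in particular it is $2k$-conic (since any subcomplex on at most $2k$ vertices has at most $2k+2$ vertices), so by the inductive hypothesis $K$ is $(k-1)$-connected. It remains to show $\pi_k(K)=0$. Given $f\colon S^k\to K$, simplicial approximation allows me to replace $f$ by a simplicial map $g\colon T\to K$, where $T$ is a triangulation of $S^k$, and the task becomes to extend $g$ to a simplicial map on a triangulation $D$ of $D^{k+1}$ with $\partial D=T$.

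I would build such an extension by adding interior vertices one at a time, arranging the combinatorics so that at each stage the new interior vertex $w$ has a link---a $k$-subsphere in the growing boundary of the still-unfilled region---with at most $2k+2$ vertices. The image of this link under the already-constructed simplicial map is then a subcomplex of $K$ on at most $2k+2$ vertices, so the $(2k+2)$-conic hypothesis produces a vertex $v\in K$ whose closed star contains that image; setting $\bar g(w)=v$ yields a valid simplicial extension across the cone on the link. The $(k-1)$-connectedness supplied by the inductive hypothesis is used to fill in the lower-dimensional spheres that arise as this cone-off procedure peels away at the remaining region.

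The main obstacle is the combinatorial step of actually organizing a triangulation of $D^{k+1}$ in which every interior vertex can be introduced with a link of at most $2k+2$ vertices. A cleaner route, and the one I would likely pursue in practice, is to reduce via Hurewicz---available since $K$ is already $(k-1)$-connected---to the statement $H_k(K;\Z)=0$, and then to use an acyclic-carrier style argument: every simplicial $k$-cycle has finite vertex support, and by iteratively peeling off pieces supported on $\leq 2k+2$ vertices (each such piece lying in a contractible closed star by the $(2k+2)$-conic hypothesis, hence already a boundary) any cycle may be exhibited as a boundary. This algebraic route puts the $(2k+2)$-conic hypothesis to work most directly, at the cost of requiring Hurewicz and a careful bookkeeping of chains of support size at most $2k+2$.
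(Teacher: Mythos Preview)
The paper does not actually prove this statement; it is quoted from Barmak's appendix in~\cite{farber2023large} and then invoked as a black box (through Corollary~\ref{cor:intersection}) in the proof of Claim~\ref{claim:lower}. So there is no in-paper argument to compare your proposal against, and I can only assess the proposal on its own terms.

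Your inductive frame and base case are correct, and recasting $\pi_k=0$ as a simplicial extension problem over a triangulated $D^{k+1}$ is the right setup. In your first route, however, the entire content of the theorem is precisely the ``combinatorial step'' you yourself flag as the obstacle: exhibiting a filling of $D^{k+1}$ in which every interior vertex can be introduced with a link on at most $2k+2$ vertices. That is exactly where the cross-polytopal number $2k+2$ enters, and without a concrete construction the argument is a restatement of the goal rather than a proof.

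Your Hurewicz route has a sharper problem. For the peeling to work, each ``piece supported on $\le 2k+2$ vertices'' must itself be a \emph{cycle}, so that lying in a contractible star forces it to bound. But a $k$-cycle with large support need not decompose as a sum of small-support $k$-cycles: the fundamental cycle of a finely triangulated $S^k$ sitting as a full subcomplex of $K$ is indecomposable in this sense. If instead you try to strip one vertex $w$ at a time, writing $z=w*\alpha+\beta$ with $\alpha$ a $(k-1)$-cycle, you need a $k$-chain in $\st_K(w)$ bounding $\alpha$; the $(2k+2)$-conic hypothesis says nothing about the topology of individual stars, and the global $(k-1)$-connectedness from the inductive hypothesis supplies bounding chains only in $K$, not in $\st_K(w)$. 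So this route also needs a genuinely new idea, not just bookkeeping.
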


\begin{corollary}
\label{cor:intersection}
Let $X$ be a metric space.
If any $2k+2$ open balls of radius $r$ in $X$ intersect, then $\conn(\vr{X}{r})\ge k$.
\end{corollary}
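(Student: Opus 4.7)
The plan is to reduce Corollary~\ref{cor:intersection} directly to Barmak's theorem (Theorem~4 of~\cite{farber2023large}) by showing that the intersection hypothesis forces $\vr{X}{r}$ to be $(2k+2)$-conic. Once conicity is established, the desired bound $\conn(\vr{X}{r}) \ge k$ is immediate.

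To verify the $(2k+2)$-conic property, take an arbitrary subcomplex $L \subseteq \vr{X}{r}$ whose vertex set has cardinality $m \le 2k+2$, and produce a vertex $v \in X$ with $L \subseteq \st_{\vr{X}{r}}(v)$. Enumerate the vertices as $x_1, \ldots, x_m$ and pad the list with repetitions so as to obtain $2k+2$ (not necessarily distinct) centers $x_1, \ldots, x_{2k+2}$. The hypothesis applied to the open balls $B(x_1;r), \ldots, B(x_{2k+2};r)$ yields a common point $v \in \bigcap_{i=1}^{2k+2} B(x_i;r)$, which after removing the repeats simply gives $d(v, x_i) < r$ for every $i \in \{1, \ldots, m\}$.

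For each simplex $\sigma \in L$, the diameter inequality $\diam(\sigma) < r$ together with $d(v, x) < r$ for every vertex $x$ of $\sigma$ gives $\diam(\sigma \cup \{v\}) < r$, so $\sigma \cup \{v\}$ is a simplex of $\vr{X}{r}$ and hence $\sigma \in \st_{\vr{X}{r}}(v)$. Therefore $L \subseteq \st_{\vr{X}{r}}(v)$, which is exactly the $(2k+2)$-conic condition, and Barmak's theorem concludes the proof.

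I do not anticipate any substantial technical obstacle: the only point requiring slight care is the padding step, which lets us apply the hypothesis (phrased for exactly $2k+2$ balls) to subcomplexes with fewer than $2k+2$ vertices. Once that is handled, the conic condition reduces immediately to the diameter condition defining Vietoris--Rips simplices.
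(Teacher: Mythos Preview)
Your proposal is correct and follows essentially the same approach as the paper's own proof: both verify that $\vr{X}{r}$ is $(2k+2)$-conic by finding a common point $v$ in the intersection of the open balls centered at the vertices of $L$, and then invoke Barmak's Theorem~4 from~\cite{farber2023large}. Your padding step and explicit verification that $\diam(\sigma\cup\{v\})<r$ are slightly more detailed than the paper's version, but the argument is the same.
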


\begin{figure}[h]
\includegraphics[width=\textwidth]{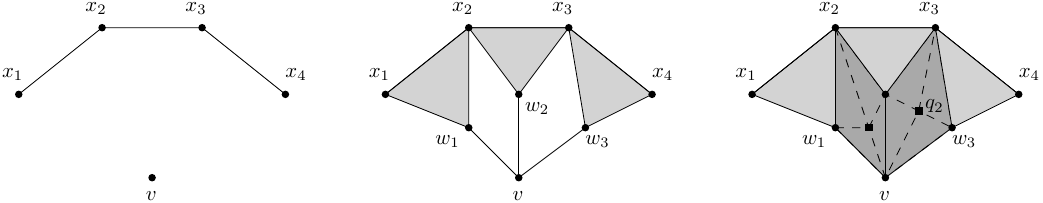}
\vspace{-2mm}
\caption{An example of Corollary~\ref{cor:intersection} when $k=1$.}
\label{fig:simplycon}
\end{figure}

Let $X$ be a metric space and let $r>0$.
Suppose that any four open $r$-balls in $X$ have a nonempty intersection, i.e., suppose that the hypothesis of Corollary~\ref{cor:intersection} is satisfied with $k=1$.
Corollary~\ref{cor:intersection} then claims that $\vr{X}{r}$ is simply-connected.
One way to see that this is true is sketched in Figure~\ref{fig:simplycon}.
Take a simplicial loop in $\vr{X}{r}$ determined by a sequence of vertices $x_0, x_1, x_2, \ldots, x_{k-1}, x_{k}=x_0$, and fix $v \in X$; see Figure~\ref{fig:simplycon}(left).
By assumption, for each index $i$ in $mod(k)$ arithmetic, there exists $w_i \in X$ at distance less than $r$ from $x_i, x_{i+1}$ and $v$, and hence $\vr{X}{r}$ contains each $2$-simplex spanned by $x_i, x_{i+1}, v$ and each $1$-simplex spanned by $w_i,v$; see Figure~\ref{fig:simplycon}(middle).
By assumption, for each $i$ in $mod(k)$ arithmetic, there exists $q_i \in X$ at distance less than $r$ from $w_i, x_{i+1}, w_{i+1},$ and $v$.
Hence for each $i$, $\vr{X}{r}$ contains the four $2$-simplices spanned by $w_i, x_{i+1}, q_{i}$, by $w_{i+1}, x_{i+1}, q_{i}$, by $w_{i+1}, v, q_{i}$, and by $w_{i}, v, q_{i}$.
These four $2$-simplices are sketched in Figure~\ref{fig:simplycon}(right), where one can observe that the $2$-simplices we have mentioned throughout this proof form a simplicial nullhomology of the original simplicial loop in $\vr{X}{r}$.

\begin{proof}[Proof of Corollary~\ref{cor:intersection}]
Suppose $L$ is a subcomplex of $\vr{X}{r}$ with at most $2k+2$ vertices $x_1,\ldots,x_\ell$, with $\ell\le 2k+2$.
By assumption, the intersection $\cap_{i=1}^\ell B_X(x_i;r)$ is nonempty. 
For any $v$ in this intersection, we have $d(v, x_i)<r$ for all $i$.
Hence, the subcomplex $L$ is contained in a simplicial cone in $\vr{X}{r}$ with apex $v$, or in other words, in the closed star $\st_{\vr{X}{r}}(v)$.
So $\vr{X}{r}$ is $(2k+2)$-conic, and it follows from~\cite[Appendix, Theorem~4]{farber2023large} that $\conn(\vr{X}{r})\ge k$.
\end{proof}

We are now prepared to prove Claim~\ref{claim:lower}.

\begin{proof}[Proof of Claim~\ref{claim:lower}]
Let $n\ge 1$ and $\delta>0$.
We must show that if $\delta < \cov_{S^n}(2k+2)$ then $\conn(\vr{S^n}{\pi-\delta})\ge k$.

Let $x_1,\ldots,x_{2k+2}\in S^n$.
Because no $2k+2$ closed balls of radius $\delta$ can cover $S^n$, it follows that the intersection $\cap_{i=1}^{2k+2} (S^n\setminus B_{S^n}[-x_i;\delta])=\cap_{i=1}^{2k+2} B_{S^n}(x_i;\pi-\delta)$ is nonempty. 
Hence any $2k+2$ open balls of radius $\pi-\delta$ in $S^n$ intersect, and so Corollary~\ref{cor:intersection} implies that $\conn(\vr{S^n}{\pi-\delta})\ge k$.
\end{proof}

Another example of Corollary~\ref{cor:intersection} is the following result.

\begin{corollary}
\label{cor:volume}
Let $X$ be a metric space equipped with a measure $\mu$.
If any open ball of radius $r$ in $X$ has measure greater than $\frac{2k+1}{2k+2}\mu(X)$, then $\conn(\vr{X}{r})\ge k$.
\end{corollary}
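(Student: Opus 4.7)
The plan is to reduce Corollary~\ref{cor:volume} to Corollary~\ref{cor:intersection} via a simple measure-theoretic union bound. Since Corollary~\ref{cor:intersection} already tells us that $\conn(\vr{X}{r})\ge k$ whenever any $2k+2$ open $r$-balls in $X$ have nonempty common intersection, it suffices to verify this intersection hypothesis from the volume hypothesis.

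So the main step is: given open balls $B_1,\dots,B_{2k+2}$ of radius $r$, each with $\mu(B_i) > \tfrac{2k+1}{2k+2}\mu(X)$, show $\bigcap_i B_i \neq \emptyset$. Taking complements, $\mu(X\setminus B_i) < \tfrac{1}{2k+2}\mu(X)$. By finite subadditivity of $\mu$,
\[
\mu\left(\bigcup_{i=1}^{2k+2} (X\setminus B_i)\right) \le \sum_{i=1}^{2k+2} \mu(X\setminus B_i) < (2k+2)\cdot \tfrac{1}{2k+2}\mu(X) = \mu(X).
\]
Hence $\bigcup_i (X\setminus B_i)$ is a proper subset of $X$, so its complement $\bigcap_i B_i$ is nonempty. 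Applying Corollary~\ref{cor:intersection} with this family of $2k+2$ balls yields $\conn(\vr{X}{r})\ge k$.

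There is no real obstacle here; the only small subtlety is what regularity one wants from $\mu$. The argument uses only finite subadditivity on measurable sets (so any finitely subadditive set function defined on open balls and their finite unions suffices), and does not require $\mu$ to be, say, a probability measure or Borel-regular. If desired, one could state the result with the slightly stronger conclusion that $\bigcap_i B_i$ has positive measure, though only nonemptiness is needed to invoke Corollary~\ref{cor:intersection}.
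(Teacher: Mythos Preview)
Your proof is correct and follows the same approach as the paper: a union bound on the complements shows that any $2k+2$ open $r$-balls have nonempty intersection (indeed, intersection of positive measure), and then Corollary~\ref{cor:intersection} applies. The paper's proof is just a terser version of exactly this argument.
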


\begin{proof}
The intersection of any $2k+2$ open balls of radius $r$ in $X$ has positive measure, and hence is nonempty.
Therefore Corollary~\ref{cor:intersection} gives $\conn(\vr{X}{r})\ge k$.
\end{proof}

A convenient example is when $X$ is a homogeneous manifold in which all balls of the same radius have the same volume, such as tori $(S^1)^n$, projective spaces $\RP^n$, or complex projective spaces $\CP^n$.
Of course, the bound in Corollary~\ref{cor:volume} is not novel if some ball $B(x;r)$ is equal to all of $X$, since then $\vr{X}{r}$ is contractible since it is a cone.

\subsection{Upper bound on connectivity: Proving Claim~\ref{claim:upper}}
\label{ssec:upper}


Our proof will use spaces equipped with $\Z/2$ actions and odd (or $\Z/2$-equivariant) maps.
Let $X$ and $Y$ be two spaces equipped with $\Z/2$ actions, denoted by $x\mapsto -x$ and $y\mapsto -y$.
We say that a map $f\colon X\to Y$ is \emph{odd} if $f(-x)=-f(x)$ for all $x\in X$.
In the case of $S^n$, the $\Z/2$ action sends $x$ to its antipodal point $-x$.
The Borsuk--Ulam theorem~\cite{matousek2003using} states that there cannot exist a continuous $\Z/2$ map from $S^k$ to $S^{k-1}$.
This is true even in the case $k=0$, when $S^0$ consists of two points and $S^{-1}$ is the emptyset.

As explained in the proof of~\cite[Proposition~5.3.2(iv)]{matousek2003using}, if $X$ is a space with a $\Z/2$ action that is $(k-1)$-connected, then there exists an odd continuous map $S^k\to X$.
Indeed, for the base case $k=0$, if $\conn(X)\ge -1$, then $X$ is nonempty.
By picking an arbitrary point $x\in X$ we produce a map from a point into $X$, and then by reflecting this point under the $\Z/2$ action we obtain a $\Z/2$ map from $S^0$ into $X$.
If $X$ is 0-connected, then there is a path between $x$ and $-x$, and then by reflecting this path under the $\Z/2$ action we obtain a $\Z/2$ map from $S^1$ into $X$.
Proceeding inductively, we obtain a $\Z/2$ map from $S^{k-1}$ into $X$.
If $X$ is $(k-1)$-connected, this map extends via a nullhomotopy $D^k\to X$, and by reflecting this nullhomotopy under the $\Z/2$ action we obtain a $\Z/2$ map from $S^k$ into $X$.

Proposition~5.3.2(iv) of~\cite{matousek2003using} concludes that if $X$ is a space with a $\Z/2$ action that is $(k-1)$-connected, then there cannot exist an odd continuous map $X\to S^{k-1}$.
Otherwise, composing this map with the one $S^k\to X$ constructed in the paragraph above would produce an odd continuous map $S^k \to S^{k-1}$, contradicting the Borsuk--Ulam theorem.

In the case of a Vietoris--Rips simplicial complex of a sphere, the $\Z/2$ action on $\vr{S^n}{r}$ is induced from the $\Z/2$ action on $S^n$, meaning $-(\sum_j \lambda_j y_j)=\sum_j \lambda_j (-y_j)$ for every point $\sum_j \lambda_j y_j$ in the geometric realization of $\vr{S^n}{r}$.
We note that the $\Z/2$ action on $\vr{S^n}{r}$ is free if $r<\pi$.

\begin{proof}[Proof of Claim~\ref{claim:upper}]
Let $n\ge 1$ and $\delta>0$.
We must show that if $\delta \ge 2\cdot \cov_{\RP^n}(k)$, then $\conn(\vr{S^n}{\pi-\delta})<k-1$.

We mimic the construction of~\cite[Theorem~3]{ABF2}, using Vietoris--Rips simplicial complexes instead of Vietoris--Rips metric thickenings, in order to obtain a continuous odd map $\vr{S^n}{\pi-\delta}\to S^{k-1}$.
Suppose $\delta \geq 2\cdot \cov_{\RP^n}(k)$. 
By Lemma~\ref{lem:cov-inf}, the infimum in the definition of the covering radius is attained, and there exist $[x_1],\dots, [x_k]\in\RP^n$ such that every point in $\RP^n$ is within distance $\tfrac{\delta}{2}$ from one of the $[x_i]$.
After lifting each $[x_i]$ to a pair of antipodal points $x_i$ and $-x_i$ in $S^n$, suppose $y,y'\in S^n$ are such that $d_{S^n}(y,x_i)\leq \tfrac{\delta}{2}$ and $d_{S^n}(y',-x_i)\leq \tfrac{\delta}{2}$. 
It follows that $d_{S^n}(y,y')\geq \pi-\delta$.
Therefore, no subset of $S^n$ of diameter less than $\pi-\delta$ intersects both $B[x_i;\tfrac{\delta}{2}]$ and $B[-x_i;\tfrac{\delta}{2}]$.

\begin{figure}[htb]
\centering
\includegraphics[width=2in]{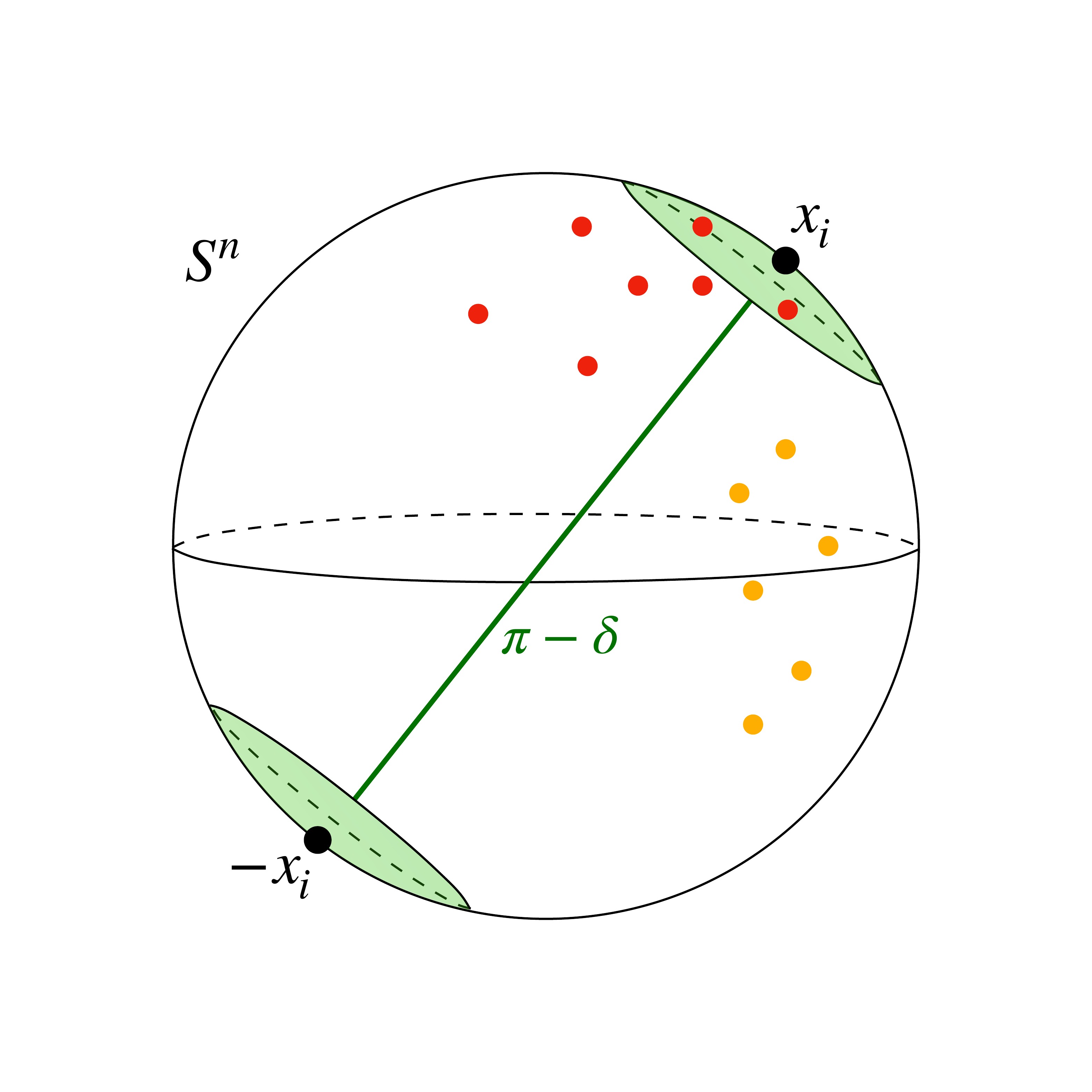}
\caption{An illustration of the odd map $f_i\colon \vr{S^n}{\pi-\delta}\to \R$.
A measure $\mu$ supported on the red points will have $f_i(\mu)>0$, whereas a measure $\nu$ supported on the orange points will have $f_i(\nu)=0$.}
\label{fig:proof-odd-map}
\end{figure}

For each $i=1,\dots, k$, define the odd map $f_i\colon \vr{S^n}{\pi-\delta}\to \R$ by the rule 
\[f_i(\textstyle{\sum}_{j}\lambda_j y_j) = \begin{cases}
\phantom{-}\sum_j \lambda_j d(y_j, S^n \setminus B[x_i;\tfrac{\delta}{2}] ) & \text{if }  \{y_j\}_j \cap B[-x_i;\tfrac{\delta}{2}]=\emptyset \\ 
-\sum_j \lambda_j d(y_j, S^n \setminus B[-x_i;\tfrac{\delta}{2}] ) & \text{if }  \{y_j\}_j \cap B[x_i;\tfrac{\delta}{2}]=\emptyset.  
\end{cases}\]
See Figure~\ref{fig:proof-odd-map}.
Observe that $f_i$ is well-defined: $\diam(\{y_j\}_j)<\pi-\delta$ implies at most one of the two cases above holds, and if both cases hold then they agree and give $f_i(\textstyle{\sum}_{j}\lambda_j y_j)=0$.
To see that $f_i$ is continuous on the geometric realization of $\vr{S^n}{\pi-\delta}$, note that $f_i$ is linear in each simplex $\{y_j\}_j$ (with respect to the barycentric coordinates $\{\lambda_j\}_j$), with agreement whenever two simplices share a face.
Since the balls $\big\{B\big[[x_i];\frac{\delta}{2}\big]\big\}_i$ cover $\RP^n$, the balls $\big\{B[x_i;\frac{\delta}{2}]\big\}_i \cup \big\{B[-x_i;\frac{\delta}{2}]\big\}_i$ cover $S^n$, and therefore the map 
$(f_1,\ldots,f_k)\colon \vr{S^n}{\pi-\delta} \to \R^k\setminus\{\vec{0}\}$ 
misses the origin.
By normalizing, we obtain an odd and continuous map
\[\tfrac{(f_1,\dots, f_k)}{\lVert (f_1,\dots, f_k) \rVert}\colon \vr{S^n}{\pi-\delta}\to S^{k-1}.\]
It follows from~\cite[Proposition~5.3.2(iv)]{matousek2003using} that $\vr{S^n}{\pi-\delta}$ is not $(k-1)$-connected.
\end{proof}

We remark that there is an analogue of~\cite[Proposition~5.3.2(iv)]{matousek2003using} for more general groups $G$.
Indeed,~\cite[Lemma~6.2.2]{matousek2003using} implies that if $X$ is a space with a $G$ action that is $(k-1)$-connected, then there exists a $G$-equivariant continuous map from the $k$-skeleton of $EG$ to $X$, where $EG$ is the the total space of the universal bundle over the classifying space $BG$~\cite{tom1987transgroups}.
In other words, the connectivity of $X$ means there are no obstructions to defining the desired $G$-equivariant map.

\subsection{Proof of corollaries}
\label{ssec:corollary}

We prove Corollaries~\ref{cor:main1} and~\ref{cor:main2}.

\begin{proof}[Proof of Corollary~\ref{cor:main1}]
Let $n\ge 1$ and $\delta>0$.
We must prove that 
\[\tfrac{1}{2}\numCover_{S^n}(\delta)-2 \le \conn(\vr{S^n}{\pi-\delta}) \le \numCover_{\RP^n}(\tfrac{\delta}{2})-2.\]

Let $\conn(\vr{S^n}{\pi-\delta}=k-1$.
By Theorem~\ref{thm:main} we have $\delta < 2\cdot \cov_{\RP^n}(k)$.
This means $k < \numCover_{\RP^n}(\tfrac{\delta}{2})$, or equivalently, $\conn(\vr{S^n}{\pi-\delta}) \le \numCover_{\RP^n}(\tfrac{\delta}{2})-2$.

Let $\conn(\vr{S^n}{\pi-\delta}=k-1$.
Also by Theorem~\ref{thm:main} we have $\cov_{S^n}(2k+2) \le \delta$.
Since the infimimum in the definition of the covering radius is realized (Lemma~\ref{lem:cov-inf}), this gives $\numCover_{S^n}(\delta)\le 2k+2$.
Equivalently, $\tfrac{1}{2}\numCover_{S^n}(\delta) \le k+1 = \conn(\vr{S^n}{\pi-\delta}+2$.
\end{proof}

\begin{proof}[Proof of Corollary~\ref{cor:main2}]
We must prove that for $n\ge 1$ and $\varepsilon>0$, the homotopy type of $\vr{S^n}{r}$ changes infinitely many times as $r$ varies over the interval $(\pi-\varepsilon,\pi)$.

By Corollary~\ref{cor:main1} we have
\[\tfrac{1}{2}\numCover_{S^n}(\delta)-2 \le \conn(\vr{S^n}{\pi-\delta}) \le \numCover_{\RP^n}(\tfrac{\delta}{2})-2.\]
The righthand inequality shows $\conn(\vr{S^n}{r})<\infty$ for all $r<\pi$, and the lefthand inequality shows that as $r$ tends to $\pi$ from below, $\conn(\vr{S^n}{r})$ tends to $\infty$.
Therefore $\conn(\vr{S^n}{r})$ attains infinitely many different values as $r$ varies over $(\pi-\varepsilon,\pi)$.
So $\vr{S^n}{r}$ attains infinitely many different homotopy types over this range, as well.
\end{proof}

\section{Example explicit connectivity bounds}
\label{sec:examples}

In this section, we give examples of explicit bounds on the connectivity of $\vr{S^n}{r}$ obtained from Theorem~\ref{thm:main} by inserting known bounds on the covering radii of spheres and of projective spaces for $n=1$ and $n=2$.

\subsection{The case $n=1$}
\label{ssec:examples-n1}

It is clear that the covering radii of the circle are realized by balls centered at evenly-spaced points; hence $\cov_{S^1}(2k+2)=\tfrac{\pi}{2k+2}$. 
Similarly, viewing $\RP^1$ as the geodesic circle of circumference $\pi$, it follows that $\cov_{\RP^1}(k)=\tfrac{\pi}{2k}$. 

On the other hand, it is known from~\cite{AA-VRS1} that if $\frac{\pi}{k+2} \le \delta < \frac{\pi}{k}$ with $k$ odd, then $\vr{S^1}{\pi-\delta}\simeq S^k$ is homotopy equivalent to an odd sphere of dimension $k$, and hence $\conn(\vr{S^1}{\pi-\delta})=k-1$.
So, in the case $n=1$, we have $\conn(\vr{S^1}{\pi-\delta})=k-1$ if and only if
\begin{itemize}
\item $k$ is odd and $\delta\in[\frac{\pi}{k+2},\frac{\pi}{k})$, which is a subset of the interval $[\cov_{S^1}(2k+2),2\cdot \cov_{\RP^1}(k))=[\frac{\pi}{2k+2},\frac{\pi}{k})$ given in Theorem~\ref{thm:main}. 
We note that the righthand endpoint $\frac{\pi}{k}$ obtained from Claim~\ref{claim:upper} in the case of $n=1$ and $k$ odd is tight, however.
\item $k$ is even and $\delta\in \emptyset$, which is again of course a subset of the interval given in Theorem~\ref{thm:main}.
\end{itemize}

\begin{figure}[h]
\includegraphics[width=0.95\linewidth]{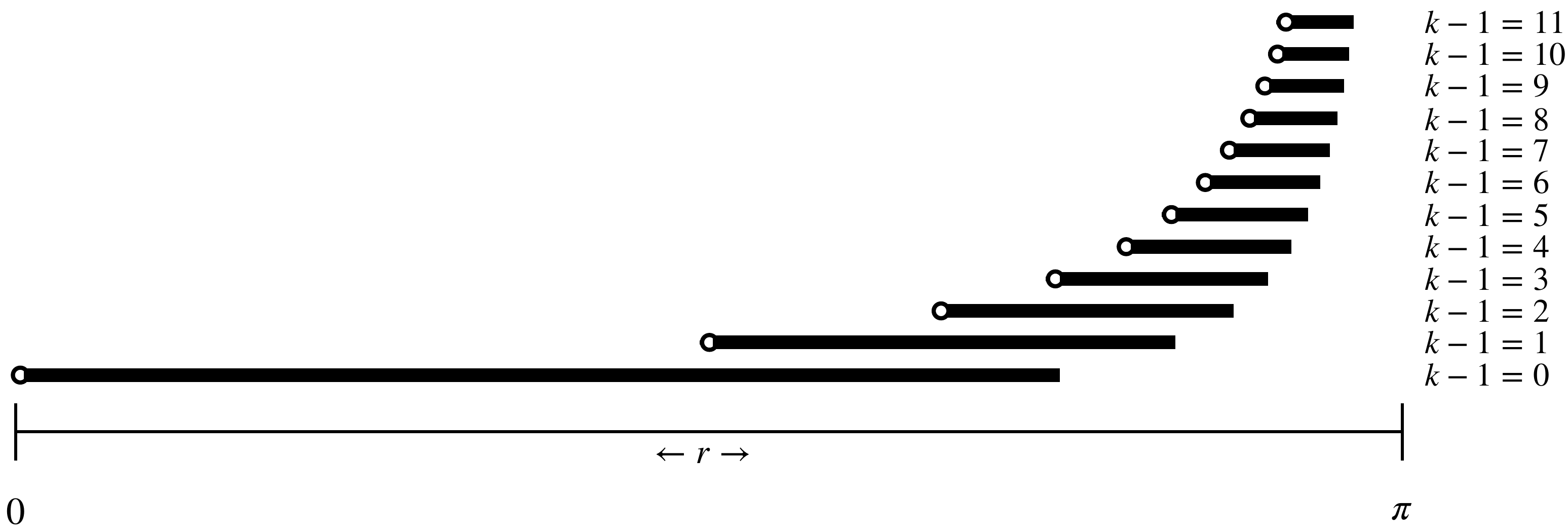}
\caption{
Intervals where $\vr{S^1}{r}$ may have connectivity $k-1$.
Cf.\ Figure~\ref{fig:s1_intervals-intro}. 
}
\label{fig:S1-intervals}
\end{figure}

\subsection{The case $n = 2$}
\label{ssec:examples-n2}

Figure~\ref{fig:S2-intervals} shows intervals where $\vr{S^2}{r}$ may have connectivity $(k-1)$ for small values of $k$.
The endpoints of these intervals are plotted using bounds for $\cov_{S^2}(2k+2)$ and $\cov_{\RP^2}(k)$ obtained by L.\ T{\' o}th~\cite{toth1943covering}, G.\ T{\' o}th~\cite{toth1969}, Sch{\"u}tte~\cite{schutte1955uberdeckungen}, Jucovi{\v c}~\cite{jucovic1960some}, Tarnai, G{\'a}sp{\'a}r, and Fowler~\cite{tarnai1985covering, tarnai1986covering, tarnai1991covering, fowler2002circle};  see Table~\ref{tab:bounds}.
Many of these values  are known to be tight (equalities, not just upper bounds).

\begin{figure}[h]
\includegraphics[width=0.95\linewidth]{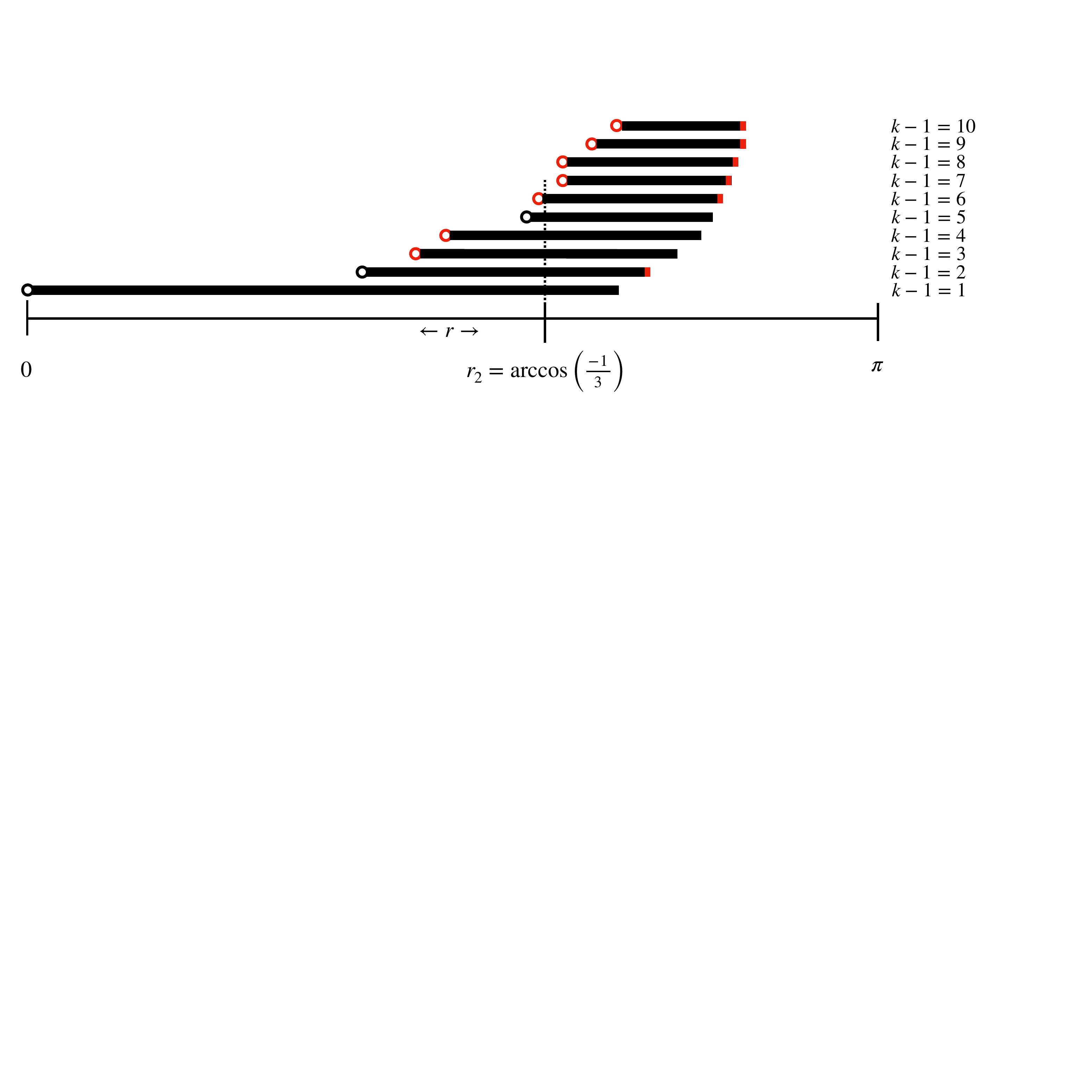}
\caption{
Intervals where $\vr{S^2}{r}$ may have connectivity $k-1$.
The red endpoints are plotted using only approximate values of $\cov_{S^2}(2k+2)$ or $\cov_{\RP^2}(k)$ that bound the true values; see Table~\ref{tab:bounds} for a subset of these values.
}
\label{fig:S2-intervals}
\end{figure}




\begin{table}[htbp]
\centering
\begin{tabular}{|c|c|c|}
\hline
\textbf{$k$} & Upper bound for $\cov_{S^2}(2k+2)$~\cite{tarnai1991covering} & Upper bound for $\cov_{\RP^2}(k)$~\cite{fowler2002circle} \\
\hline \hline
1 & \textcolor{blue}{$\arccos(\tfrac{1}{3}) \approx 1.230959$} 
& \textcolor{blue}{$\tfrac{\pi}{2}\approx 1.570796$} 
\\
\hline
2 & \textcolor{blue}{$\tfrac{1}{2}\arccos(\tfrac{-1}{3})\approx 0.955317$} 
& \textcolor{blue}{$\tfrac{\pi}{2}\approx 1.570796$} 
\\
\hline
3 & 0.840193  & \textcolor{blue}{$\tfrac{1}{2}\arccos(\tfrac{-1}{3})\approx 0.955317$} 
\\
\hline
4 & \textcolor{blue}{0.738411} 
& 0.857072\\
\hline
5 & \textcolor{blue}{$\arccos\left(\sqrt{\tfrac{5+2\sqrt{5}}{15}}\right)\approx 0.652358$} 
& 0.801530 \\
\hline
6 & \textcolor{blue}{0.609782} 
& \textcolor{blue}{$\arccos\left(\sqrt{\tfrac{5+2\sqrt{5}}{15}}\right) \approx 0.652358$} 
\\
\hline
7 & 0.574193 & 0.631914\\
\hline
\end{tabular}
\vspace{1em}
\caption{Bounds for $\cov_{S^2}(2k+2)$ and $\cov_{\RP^2}(k)$ in radians.
Bounds in blue are tight.
The values for $\cov_{S^2}(2k+2)$ correspond to the tetrahedron for $k=1$, the octahedron for $k=2$, and the icosahedron for $k=5$.
The values for $\cov_{\RP^2}(k)$ correspond to the octahedron for $k=3$ and the icosahedron for $k=6$.}
\label{tab:bounds}
\end{table}

\section{Rigidity of connectedness}
\label{sec:rigidity}

Let $M$ be a metric space and let $r>0$ be such that $\vr{M}{r}$ is $k$-connected.
There may exist metric spaces arbitrarily close to $M$ in the Gromov--Hausdorff metric whose Vietoris--Rips complexes at scale $r$ are not $k$-connected.
Indeed, the stability of persistent homology~\cite{ChazalDeSilvaOudot2014} only implies that $X$ and $M$ have similar persistent homology. 

In this section we observe that the lower bound for $k$-connectedness for the Vietoris-Rips complexes in Corollary~\ref{cor:intersection} is stable, in some sense.
Given a fixed $r$ for which we have proven the $k$-connectedness for the Vietoris-Rips complexes of $M$ using Corollary~\ref{cor:intersection}, the mentioned $k$-connectedness is rigid: the same connectedness bound also holds for the Vietoris--Rips complex built on a metric space $X$ that is sufficiently close to $M$ in the Gromov--Hausdorff metric.

\begin{theorem}
\label{thm:rigid1}
Let $M$ be a metric space, $k$ an integer, and $\rho>0$ such that each collection of $(2k+2)$ open balls of radius $\rho$ in $M$ has a nonempty intersection.
Then for each $r > \rho$, $\vr{M}{r}$ is $k$-connected.
Furthermore, this $k$-connectedness is rigid in the following sense: for each metric space $X$ satisfying $d_\gh(X,M) < \frac{r-\rho}{2}$, the complex $\vr{X}{r}$ is $k$-connected.
\end{theorem}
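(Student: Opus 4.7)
The plan is to reduce both assertions to Corollary~\ref{cor:intersection}, which says that any metric space whose $2k+2$ open $r$-balls always intersect has a $k$-connected Vietoris--Rips complex at scale $r$. The intersection hypothesis on $M$ is given at radius $\rho$, so the strategy is to transfer it first to larger radii in $M$ (trivial) and then to radius $r$ in $X$ via a Gromov--Hausdorff correspondence.

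For the first assertion, I would simply observe that $B_M(m;\rho)\subseteq B_M(m;r)$ whenever $r>\rho$, so if any $2k+2$ open $\rho$-balls in $M$ share a common point, then so do any $2k+2$ open $r$-balls. Corollary~\ref{cor:intersection} then gives $\conn(\vr{M}{r})\ge k$ immediately.

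For the rigidity assertion, I would use the correspondence formulation of $d_\gh$. Since $d_\gh(X,M)<\tfrac{r-\rho}{2}$, there is a correspondence $R\subseteq X\times M$ with $\dis(R)<r-\rho$. Given any $2k+2$ points $x_1,\dots,x_{2k+2}\in X$, I would pick $m_i\in M$ with $(x_i,m_i)\in R$ for each $i$, and then invoke the hypothesis on $M$ to obtain a common point $m\in\bigcap_i B_M(m_i;\rho)$. Choosing any $x\in X$ with $(x,m)\in R$, the defining inequality of the distortion gives, for every $i$,
\[ d_X(x,x_i)\le d_M(m,m_i)+\dis(R)<\rho+(r-\rho)=r,\]
so $x\in\bigcap_i B_X(x_i;r)$. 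Hence every $2k+2$ open $r$-balls in $X$ intersect, and Corollary~\ref{cor:intersection} yields $\conn(\vr{X}{r})\ge k$.

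There is no real obstacle here; the argument is a clean transfer of the intersection property across the correspondence. The only subtlety worth flagging is the slack between $r$ and $\rho$: it must be strictly larger than $\dis(R)$, which is why the hypothesis involves $\tfrac{r-\rho}{2}$ rather than $\tfrac{r-\rho}{2}$ with an equality, so that a correspondence with $\dis(R)<r-\rho$ is guaranteed to exist.
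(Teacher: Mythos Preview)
Your proof is correct and follows the same strategy as the paper's: transfer the $(2k+2)$-ball intersection property from $M$ to $X$ via Gromov--Hausdorff closeness, then invoke Corollary~\ref{cor:intersection}. The only cosmetic difference is that you use the correspondence formulation of $d_\gh$ (a relation $R$ with $\dis(R)<r-\rho$), whereas the paper uses the equivalent embedding formulation (a metric on $X\coprod M$ with Hausdorff distance $<\nu<\tfrac{r-\rho}{2}$) and the triangle inequality twice; both routes yield the same bound $d_X(x,x_i)<r$.
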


\begin{proof}
It suffices to prove the last conclusion of the theorem. 
Let $0 < \nu < \frac{r-\rho}{2}$.
We will show that if the metric space $X$ satisfies $d_\gh(X,M) < \nu$, then the complex $\vr{X}{r}$ is $k$-connected.

Let $d$ denote a metric on $X \coprod M$ in which the Hausdorff distance between the naturally embedded $X$ and $M$ is less than $\nu$.
Such a metric exists by the definition of the Gromov--Hausdorff distance.
For each $y\in M$ choose some $\tilde{y} \in X$ such that $d(y, \tilde{y})< \nu$.

With the aim of applying Corollary~\ref{cor:intersection}, choose any $x_1, x_2, \ldots, x_{2k+2} \in X$.
For each $i$ choose $x'_i \in M$ such that $d(x_i, x'_i) < \nu$.
The intersection
$
\cap_{i=1}^{2k+2} B_{M} (x'_i; r-2\nu)
$, 
is nonempty by assumption, since $r-2\nu>\rho$.
So there exists some $m\in M$ with $d(m, x'_i) < r - 2\nu$ for all $i$.
By the triangle inequality we deduce that for $\tilde{m} \in X$, we have $d(\tilde{m}, x_i) < r$ for all $i$.
So $\cap_{i=1}^{2k+1}B_X(x_i;r)\neq \emptyset$, and so Corollary~\ref{cor:intersection} implies $\conn(\vr{X}{r})\ge k$.
\end{proof}

For example, let $M=S^n$.
In the proof of Claim~\ref{claim:lower}, we proved that any collection of $(2k+2)$ open balls of radius greater than $\pi-\cov_{S^n}(2k+2)$ is nonempty.
So $\vr{S^n}{r}$ is $k$-connected for
\[
r \in \Big(\pi-\cov_{S^n}(2k+2), \infty\Big).
\]
This interval of $k$-connectivity is stable in the following sense: 
for any metric space $X$, we have that
$\vr{X}{r}$ is $k$-connected for 
\[r \in \Big(\pi-\cov_{S^n}(2k+2)+2d_\gh(X,S^n), \infty\Big).\]

\section{Conclusion}
\label{sec:conclusion}

We end with some conjectures and open questions that we hope will help inspire further research.

\begin{conjecture}
Though Hausmann's Problem~3.12~\cite{Hausmann1995} has a negative answer in general (see~\cite{virk2021counter}), we conjecture that it has an affirmative answer for Vietoris--Rips complexes of spheres.
Explicitly, if $S^n$ is the $n$-sphere, then we conjecture that $\conn(\vr{S^n}{r})$ is a non-decreasing function of the scale parameter $r$.
\end{conjecture}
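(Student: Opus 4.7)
My plan is to approach this conjecture via a Morse--Bott analysis of the filtration $\{\vrm{S^n}{r}\}_{r > 0}$ of Vietoris--Rips metric thickenings. By Gillespie's weak homotopy equivalence $\vr{S^n}{r}\simeq \vrm{S^n}{r}$~\cite{gillespie2023vietoris}, the two have the same connectivity, so it suffices to show $\conn(\vrm{S^n}{r})$ is non-decreasing in $r$. The set-up is to consider the diameter functional $D \colon \vrm{S^n}{\pi} \to [0,\pi)$ sending a measure $\mu$ to the diameter of its support, so that $\vrm{S^n}{r}$ is the sublevel set $D^{-1}([0,r))$. Critical configurations are those supports that cannot be perturbed to strictly decrease the diameter; by the $O(n+1)$-symmetry they come in orbits rather than as isolated points, forcing the Morse--Bott rather than Morse setting.

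The first step will be a first Morse lemma: between consecutive critical scales $r_i < r_{i+1}$, the inclusion $\vrm{S^n}{r} \hookrightarrow \vrm{S^n}{r'}$ is a homotopy equivalence for all $r_i < r < r' \le r_{i+1}$, so connectivity is locally constant there. Conjecture~\ref{conj:countable} already predicts that the critical scales form a countable discrete set, which would reduce the problem to analyzing what happens across a single critical scale.

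The second step will be a second Morse lemma: at a critical scale $r_{i+1}$ the homotopy type of $\vrm{S^n}{r_{i+1}}$ is obtained from $\vrm{S^n}{r_{i+1}-\varepsilon}$ by attaching cells of the form $D^{d+1} \times C$, where $C$ parametrizes the $O(n+1)$-orbit of critical configurations and $d$ is their combinatorial dimension; compare the $n$-sphere example in Section~\ref{sec:related}. To conclude non-decrease of connectivity I would then prove the dimension estimate $d+1 \geq \conn(\vrm{S^n}{r_{i+1}-\varepsilon}) + 1$ for every critical orbit at scale $r = \pi-\delta$. Such a bound should be closely parallel to the upper bound $\conn(\vr{S^n}{\pi-\delta}) \le \numCover_{\RP^n}(\tfrac{\delta}{2})-2$ in Corollary~\ref{cor:main1}, since a critical support at scale $\pi-\delta$ essentially encodes a cover of $\RP^n$ by antipodal balls of radius $\tfrac{\delta}{2}$ centered at half of its points, certifying a lower bound on $\numCover_{\RP^n}(\tfrac{\delta}{2})$ in terms of $d$. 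Attaching cells of dimension at least $\conn+1$ cannot drop the connectivity, which would yield the conjecture.

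The main obstacle will be that the requisite Morse--Bott framework for Vietoris--Rips metric thickenings has not yet been developed beyond the very first critical scale~\cite{AAF}, and even the enumeration of critical configurations on $S^n$ for $n \geq 2$ is an open problem. A tempting shortcut would be to use the $O(n+1)$-symmetry of $\vr{S^n}{r}$ to average any map $f \colon S^j \to \vr{S^n}{r'}$ into one taking values in $\vr{S^n}{r}$ for $r < r'$, thereby exhibiting surjectivity of $\iota_\ast \colon \pi_j(\vr{S^n}{r}) \to \pi_j(\vr{S^n}{r'})$ and bypassing Morse theory entirely; but the Fr\'echet-mean style averaging needed for this fails to be continuous precisely at critical configurations, so the approach is driven back to the same core difficulty.
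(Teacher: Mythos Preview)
The statement you are addressing is a \emph{conjecture}; the paper offers no proof, only noting that the case $n=1$ follows from~\cite{AA-VRS1}. So there is no paper argument to compare against, and what you have written is, as you yourself say, a research outline rather than a proof. The central gap is that every step rests on tools that do not yet exist: the first Morse lemma you invoke (homotopy invariance between critical scales) is itself Conjecture~\ref{conj:countable}; the second Morse lemma for $\vrm{S^n}{r}$ is established only at the single scale $r_n$; and the classification of critical configurations is open for $n\ge 2$. These are not technicalities to be filled in later---they are the substance of the problem, and your proposal assumes statements of comparable difficulty to the conjecture itself.

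One specific step also does not hold up. You assert that a critical support at scale $\pi-\delta$ ``essentially encodes a cover of $\RP^n$ by antipodal balls of radius $\tfrac{\delta}{2}$ centered at half of its points.'' But a critical simplex has diameter $\pi-\delta$, so its vertices are mutually at distance at most $\pi-\delta$; they avoid near-antipodal pairs rather than form them, and there is no mechanism by which any subset would cover $\RP^n$ at radius $\tfrac{\delta}{2}$. (For a quick sanity check: the regular tetrahedron in $S^2$ is critical at $\delta=\pi-\arccos(-\tfrac{1}{3})\approx 1.23$, yet two balls of radius $\approx 0.615$ come nowhere near covering $\RP^2$, whose $2$-cover radius is $\tfrac{\pi}{2}$.) The covers in Corollary~\ref{cor:main1} arise from an independent optimization over centers in $\RP^n$ and are unrelated to critical simplices of the diameter functional, so the dimension estimate $d+1\ge \conn(\vrm{S^n}{r_{i+1}-\varepsilon})+1$ on which your monotonicity argument rests is unsupported.
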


\begin{conjecture}[Conjecture~5.7 from~\cite{AAF}]
We conjecture that for all $n$, there is some $\varepsilon>0$ such that $\vr{S^n}{r}\simeq S^n * \frac{\so(n+1)}{A_{n+2}}$ for all $r_n < r < r_n+\varepsilon$.
(See Section~\ref{ss:vrmSn} for the definition of $r_n$.)
\end{conjecture}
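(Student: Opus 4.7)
The plan is to reduce from Vietoris--Rips simplicial complexes to Vietoris--Rips metric thickenings using Gillespie's weak homotopy equivalence $|\vr{X}{r}|\simeq \vrm{X}{r}$ for totally bounded $X$~\cite{gillespie2023vietoris}, and then to prove a local ``first Morse lemma'' at the critical scale $r_n$ to extend the known homotopy type from $r_n$ to a neighborhood $(r_n, r_n+\varepsilon)$. Since $S^n$ is compact (hence totally bounded), Gillespie's result gives a weak equivalence, and both sides have CW homotopy type, so the problem reduces to proving $\vrm{S^n}{r} \simeq S^n * \tfrac{\so(n+1)}{A_{n+2}}$ for all $r \in (r_n, r_n+\varepsilon)$.

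First I would record the base case: by Proposition~5.3 and Theorem~5.4 of~\cite{AAF}, we already have $\vrmleq{S^n}{r_n} \simeq S^n * \tfrac{\so(n+1)}{A_{n+2}}$, and $\vrmleq{S^n}{r_n} \subseteq \vrm{S^n}{r_n+\varepsilon}$ for every $\varepsilon > 0$. The task then becomes producing, for some $\varepsilon > 0$, a deformation retraction of $\vrm{S^n}{r_n+\varepsilon}$ onto $\vrmleq{S^n}{r_n}$. The natural candidate is a diameter-decreasing flow: for each probability measure $\mu = \sum_i \lambda_i \delta_{x_i}$ whose support diameter lies in $(r_n,r_n+\varepsilon)$, move the support points along geodesics of $S^n$ in a direction that monotonically decreases $\diam(\supp(\mu))$, terminating either when the diameter reaches $r_n$ or when two support points collide (in which case their masses are merged and $\mu$ descends to a face).

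The key geometric input is that the space of inscribed regular $(n+1)$-simplices in $S^n$ of diameter exactly $r_n$ is identified with $\tfrac{\so(n+1)}{A_{n+2}}$, and these are precisely the configurations from which no diameter-decreasing perturbation exists; any other $(n+2)$-point subset of diameter $r_n$ can be moved to strictly reduce its diameter. I would use compactness of the diameter functional on the configuration space $(S^n)^{n+2}$ to conclude that, for $\varepsilon$ small enough, every finite subset $\sigma \subset S^n$ with $\diam(\sigma) \in (r_n, r_n+\varepsilon)$ either has at most $n+1$ points---in which case an inscribed regular $(n+1)$-simplex is a strict competitor and $\diam(\sigma)$ is reducible without obstruction---or has at least $n+2$ points lying in a small Hausdorff neighborhood of the vertex set of some inscribed regular $(n+1)$-simplex. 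Either way, a continuous local diameter-reducing isotopy can be built, and the flows on different configurations can be glued via a partition of unity on the orbit space.

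The hard part will be constructing the deformation retraction continuously and globally on the Wasserstein metric thickening, not merely combinatorially on simplices. Discontinuities threaten when support points collide or when the ``direction of maximum diameter'' in a configuration has ties; handling these requires either averaging over tied directions or a barycentric smoothing, together with a careful verification that the flow extends continuously across cardinality-changing strata. A potentially cleaner alternative would be to adapt Moy's support homotopies~\cite{moyVRmS1}, developed for $\vrm{S^1}{r}$, to the higher-dimensional setting, or to extend Katz's Morse theory for neighborhoods of the Kuratowski embedding~\cite{katz1991neighborhoods} beyond its current regime and then transfer the conclusion via~\cite{lim2022vietoris,okutan2019persistence}. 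In either route, the essential new ingredient needed is a second Morse lemma showing that no further critical configurations appear in $(r_n,r_n+\varepsilon)$---a statement that seems tractable by an $\so(n+1)$-equivariant normal-form analysis near $\tfrac{\so(n+1)}{A_{n+2}}$ but which has not yet been carried out for $n\ge 2$.
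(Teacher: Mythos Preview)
The statement you are attempting to prove is a \emph{conjecture}, not a theorem: the paper lists it among open problems in Section~\ref{sec:conclusion} and provides no proof. There is therefore nothing in the paper to compare your argument against, and the result is genuinely open for $n\ge 2$.

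As for the proposal itself, it is a reasonable outline of a strategy, and you are honest about its status: you explicitly write that the essential step---showing no new critical configurations appear in $(r_n,r_n+\varepsilon)$ and building a continuous diameter-decreasing flow on $\vrm{S^n}{r_n+\varepsilon}$---``has not yet been carried out for $n\ge 2$.'' That is precisely the gap. The reduction via Gillespie's weak equivalence is fine, and the idea of deformation retracting onto $\vrmleq{S^n}{r_n}$ is the natural one (this is exactly the ``first Morse lemma'' the paper calls for in Section~\ref{sec:related}). But the difficulties you flag---continuity of the flow across strata where support cardinality changes, handling ties in the maximum-distance pair, and the $\so(n+1)$-equivariant normal-form analysis near the critical orbit---are not technicalities that can be waved through; they are the substance of the problem. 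Moy's support homotopies~\cite{moyVRmS1} work for $S^1$ in part because the configuration space of points on a circle is one-dimensional and the critical simplices are isolated up to rotation; neither holds for $S^n$ with $n\ge 2$, where the critical orbit $\tfrac{\so(n+1)}{A_{n+2}}$ has positive dimension and the local geometry near it is considerably richer. So what you have written is a fair research plan, but not a proof.
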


\begin{conjecture}
\label{conj:finite-cw}
We conjecture that $\vr{S^n}{r}$ is homotopy equivalent to a finite CW complex for all $r\ge 0$.
\end{conjecture}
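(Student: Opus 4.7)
The strategy is to pass to the Vietoris--Rips metric thickening $\vrmleq{S^n}{r}$, which is weakly homotopy equivalent to $\vr{S^n}{r}$ by Gillespie~\cite{gillespie2023vietoris}, and to produce an explicit deformation retraction onto a finite CW subcomplex. The motivating template is Moy's geometric proof~\cite{moyVRmS1} for the circle, where $\vrmleq{S^1}{r}$ is collapsed via ``support homotopies'' onto a finite CW complex (a single odd-dimensional sphere). The task is to upgrade this to all $n$.

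First, for $r\ge\pi$ the complex $\vr{S^n}{r}$ is contractible, so restrict to $0<r<\pi$. The thickening $\vrmleq{S^n}{r}$ is a compact metric space, being a closed subset of the $1$-Wasserstein space over the compact base $S^n$, where closedness follows from lower semicontinuity of the diameter of the support as a functional on Wasserstein space. However, since finite subsets of $S^n$ with pairwise geodesic distance less than $r$ can have arbitrarily large cardinality, $\vrmleq{S^n}{r}$ is infinite-dimensional, which rules out a direct appeal to West's theorem on compact ANRs. Instead, one would develop a Morse--Bott theory of the type advocated in Section~\ref{sec:related} to identify, for each regular scale $r$, a finite collection of $\oo(n+1)$-orbits of critical configurations that parameterize the essential homotopical data, together with gluing instructions encoding how these orbits assemble into a finite CW model. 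The model would be built inductively by attaching one orbit's worth of cells at each critical scale less than $r$, which requires first establishing that the set of critical scales in $(0,r)$ for fixed $r<\pi$ is finite (a statement closely related to Conjecture~\ref{conj:countable}).

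The deformation retraction onto this finite CW model would then be constructed by following a gradient-like flow on the space of finitely supported probability measures that decreases a suitable spread functional while preserving the diameter constraint, $\oo(n+1)$-equivariantly collapsing each stratum onto its critical skeleton. The hard part will be controlling this flow globally across strata where the combinatorial type of the support configuration changes, so that the retraction remains continuous at measures lying on the boundaries between strata of $\vrmleq{S^n}{r}$. Even for $n=2$, no explicit collapse of this form has been established for scales beyond the first critical radius $r_n=\arccos(-1/(n+1))$, and overcoming this obstruction is essentially equivalent to developing a fully satisfactory Morse--Bott theory for Vietoris--Rips metric thickenings of spheres; I expect that any proof of Conjecture~\ref{conj:finite-cw} will necessarily contribute such a theory as its central new ingredient.
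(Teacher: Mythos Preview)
The statement you are addressing is a \emph{conjecture} in the paper, not a theorem; the paper offers no proof, only the remark that the case $n=1$ is settled by~\cite{AA-VRS1}. So there is no ``paper's own proof'' to compare against.

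Your proposal is, by your own admission, not a proof either. It is a research outline whose central steps---the finiteness of critical scales below any given $r<\pi$, the existence of a globally continuous gradient-like flow on $\vrmleq{S^n}{r}$ that collapses across strata, and the construction of the attaching maps for the orbit-wise cells---are each explicitly flagged as open. Indeed, you note that the first of these is ``closely related to Conjecture~\ref{conj:countable}'' (itself open), and you conclude that resolving the conjecture is ``essentially equivalent to developing a fully satisfactory Morse--Bott theory'' of the kind the paper calls for but does not provide. That is an honest assessment, but it means what you have written is a plausible strategy, not a proof.

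One technical point: you assert that $\vrmleq{S^n}{r}$ is compact because it is a closed subset of the Wasserstein space over $S^n$. This is not quite right as stated: the full Wasserstein space $\mathcal{P}(S^n)$ is compact (since $S^n$ is), but $\vrmleq{S^n}{r}$ consists only of \emph{finitely supported} measures of diameter at most $r$, and the set of finitely supported measures is not closed in $\mathcal{P}(S^n)$. The diameter condition alone does not force finite support, so the closure in $\mathcal{P}(S^n)$ will contain non-atomic measures. Compactness of $\vrmleq{S^n}{r}$ therefore requires a separate argument (or a different ambient space), and in fact it is not obvious that $\vrmleq{S^n}{r}$ is compact at all.
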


These three conjectures are all true in the case $n=1$ by~\cite{AA-VRS1}.

Note that Conjecture~\ref{conj:finite-cw} is false if one uses the $\le$ convention (instead of the $<$ convention we are using in this paper), since $\vrleq{S^1}{\frac{2\pi k}{2k+1}}$ is homotopy equivalent to an uncountable wedge sum of $2k$-dimensional spheres for each integer $k\ge 0$~\cite{AA-VRS1}.

\begin{question}
Fix $n$ and $r$.
What is the smallest integer $k$ such that $\vr{S^n}{r}$ is homotopy equivalent to a CW complex of dimension $k$?

We remark that if a space $Y$ is homotopy equivalent to a CW complex of dimension $k$, then the $i$-dimensional homology $H_i(Y)$ and cohomology $H^i(Y)$ are zero for all $i\ge k+1$.
\end{question}

\begin{question}
Let $M$ be a compact Riemannian manifold.
What assumptions need to be placed on $M$ so that $\vrm{M}{r}$, $\vrmleq{M}{r}$, and $\vr{M}{r}$ always homotopy equivalent to a finite CW complex?




We note that the metric thickening $\vrmleq{S^1}{r}$ is always homtopy equivalent to a finite CW complex by~\cite{moyVRmS1}.
As mentioned above, the simplicial complex $\vrleq{S^1}{\frac{2\pi k}{2k+1}}$ is homotopy equivalent to an uncountable wedge sum of $2k$-dimensional spheres for each integer $k\ge 0$~\cite{AA-VRS1}.

What assumptions need to be placed on $M$ so that $\vrm{M}{r}$, $\vrmleq{M}{r}$, $\vr{M}{r}$, and $\vrleq{M}{r}$ are always homotopy equivalent to a finite-\emph{dimensional} CW complex?
\end{question}

\begin{question}
Recall Conjecture~\ref{conj:countable}.
Can we identify a set $C \subsetneq [0,\pi]$ of potential \emph{critical scale parameters} satisfying the ``first Morse lemma'' property that if $(s,s')\cap C=\emptyset$ for $s<s'$, then the inclusion $\vr{S^n}{r}\hookrightarrow \vr{S^n}{r'}$ is a homotopy equivalence for all $s<r\le r'<s'$?

In the case of the circle $n=1$, we have that $C=\{\frac{2\pi k}{2k+1}~|~k\in\mathbb{N}\}\cup\{\pi\}$ is the minimal set of critical scale parameters~\cite{AA-VRS1}.
We refer the reader to~\cite{lovasz1983self,katz1989diameter,MirthThesis,zaremsky2022bestvina,katz2023extremal,gorivcan2023critical} for related ideas.
\end{question}

\begin{question}
In particular, what is the relationship between the diameters of strongly self-dual polytopes in Lov{\'a}sz' paper~\cite{lovasz1983self} and the critical scale parameters of $\vr{S^n}{r}$?
We note that every vertex of such a polytope is paired with other vertices realizing the diameter~\cite[Lemma~2]{lovasz1983self}, which should be the case for critical simplices in $\vr{S^n}{r}$.
The strongly self-dual polytopes in the circle are precisely the regular odd polygons, corresponding to the critical scales of $\vr{S^1}{r}$.

What are all possible strongly self-dual polytopes in $S^n$ for $n\ge 2$?
See also~\cite{horvath2021strongly}.
Theorem~1 of~\cite{lovasz1983self} shows that for any $n\ge 1$, there are strongly self-dual polytopes in $S^n$ whose finite vertex sets are arbitrarily dense.
But, due to its inductive nature, Lov{\'a}sz' construction almost certainly does not produce all strongly self-dual polytopes.
\end{question}




\begin{question}
Proposition~5.3 of~\cite{AA-VRS1} implies that if $r$ is not a critical scale parameter and if the finite subset $X\subseteq S^1$ is sufficiently dense in $S^1$ (i.e.\ sufficiently close to $S^1$ in the Hausdorff distance), then the inclusion $\vr{X}{r}\hookrightarrow \vr{X'}{r}$ is a homotopy equivalence for all larger finite subsets $X \subseteq X' \subseteq S^1$.
This fact was used in the first proof of the homotopy types of $\vr{S^1}{r}$.
Is a similar fact true when $S^1$ is replaced with $S^n$ for $n\ge 2$?
\end{question}

\begin{question}
The papers~\cite{AAFPP-J,AA-VRS1} show that if $X\subseteq S^1$ is finite, then the only possible homotopy types of $\vr{X}{r}$ for $r>0$ are a single odd-dimensional sphere or a wedge sum of even-dimensional spheres of the same dimension (we consider the point to be a 0-fold wedge sum).
This fact was used in the proof of the homotopy types of $\vr{S^1}{r}$.
For $n\ge 2$ and $r > r_n = \arccos(\frac{-1}{n+1})$, is it possible to describe the homotopy types of $\vr{X}{r}$ for $X \subseteq S^n$ finite?

We restrict attention to $r>r_n$ for the following reason.
Any simplicial complex $\vr{Y}{r'}$ for $Y\subseteq \R^n$ finite and $r'>0$ can be obtained up to isomorphism as a simplicial complex $\vr{X}{r}$ with $X\subseteq S^n$ finite and $0<r<\pi$.
The homotopy types of Vietoris--Rips complexes of subsets of $\R^n$ are not fully understood even for $n=2$ (see Question~\ref{ques:planar-VR}), and only get more complicated as $n$ grows.
But, for $X\subseteq S^n$, perhaps restricting attention to $r>r_n$ simplifies the possible homotopy types of $\vr{X}{r}$.
\end{question}

\begin{question}
For fixed $a_1,\ldots,a_{n+1}>0$, let \[E^n=\left\{(x_1,\ldots,x_{n+1}\in \R^n~\Big|~\sum_{i=1}^{n+1}(x_i/a_i)^2=1\right\}\]
be an $n$-dimensional ellipsoid.
What are the homotopy types of $\vr{E^n}{r}$, of course as a function of the parameters $a_i$?

See Example~\ref{ex:ellipse} for partial information in the case $n=2$.

Since an ellipsoid has less symmetries than a sphere, it is conceivable that the homotopy types of $\vr{E^n}{r}$ may be easier to prove than the homotopy types of $\vr{S^n}{r}$.
Indeed, a proof might only require a Morse theory instead of Morse--Bott theory, or alternatively, a Morse--Bott theory that only has to account for fewer symmetries. 
\end{question}



\begin{question}
What are the homotopy types of Vietoris--Rips complexes of the real projective spaces $\RP^n$ and the complex projective spaces $\CP^n$?
We refer the interested reader to~\cite{AdamsHeimPeterson,katz1991rational} for some limited initial information.
See also Corollary~\ref{cor:volume} for how to lower bound the homotopy connectivity of these Vietoris--Rips complexes.
\end{question}

\begin{question}[Question~5 in Section~2 of~\cite{gasarch2017open} and Problem~7.3 in~\cite{MA-FF-AV}]
\label{ques:planar-VR}
Is the Vietoris--Rips complex of any set of points in the plane always homotopy equivalent to a wedge of spheres?

The fundamental group of any such Vietoris--Rips complex is a free group by~\cite{Chambers2010}.
So, the Vietoris--Rips complex of a set of points in the plane cannot be homotopy equivalent to $S^1 \times S^1$, for example.
If the answer to this question is affirmative, then the Vietoris--Rips complex of a set of points in the plane also could not be homotopy equivalent to $S^3 \times S^3$, for example, but so far we do not know how to rule out this possibility.

Not all of the homology of a Vietoris--Rips complex of a set of points of the plane is generated by boundaries of cross-polytopes; see~\cite[Remark~8.5]{AA-VRS1}.

Theorem~6.3 of~\cite{MA-FF-AV} and Theorem~B of~\cite{sipani2024structural} provide evidence towards an affirmative answer to this question.
\end{question}

\begin{question}
Let $(\Z^n,d_1)$ be the integer lattice, equipped with the $L^1$ metric given by $d_1(x,y)=\sum_{i=1}^n|x_i-y_i|$.
Question~3 at~\cite{zaremsky-open} asks if $\vr{(\Z^n,d_1)}{r}$ is contractible when the scale $r$ is sufficiently large, and an affirmative answer is given in~\cite{virk2024contractibility}.
But is $\vr{(\Z^n,d_1)}{r}$ contractible for each $r\ge n$?
\end{question}

\begin{question}
What are the homotopy types of Vietoris--Rips complexes of hypercube graphs?
See~\cite{carlsson2020persistent,adamaszek2022vietoris,shukla2022vietoris,feng2023homotopy,adams2024lower,adams2022v,saleh2024vietoris,bendersky2023connectivity} for recent partial progress on this question.
As we mentioned,~\cite{bendersky2023connectivity} was the inspiration for this paper.
\end{question}


\begin{question}
What are the Urysohn widths of Euclidean balls $B^n$ and spheres $S^n$, and are these values related at all to Vietoris--Rips complexes of spheres?

We refer the reader to~\cite[Section~2.5]{balitskiy2021bounds} for more information on these Urysohn widths.
The value $\mathrm{UW}_{n-1}(B^n)$ is equal to the diameter of an inscribed regular $n$-simplex in $S^n$; see~\cite[Remark~6.10]{akopyan2012borsuk} and~\cite{sitnikov1958rundheit}.
One has $\mathrm{UW}_k(B^n)=\diam(B^n)$ for $k<\frac{n}{2}$, which follows from~\cite{vsvcepin1974problem}.
The value $\mathrm{UW}_k(B^n)$ is not known for $\frac{n}{2} \le k \le n-2$.
See~\cite[Section~2.5]{balitskiy2021bounds} for a description of the (mostly unknown) values of Urysohn widths of spheres, as well.

\end{question}

\section*{Acknowledgements}

HA was funded in part by the Simons Foundation’s Travel Support for Mathematicians program.
JB was funded in part by the Southeast Center for Mathematics and Biology, an NSF-Simons Research Center for Mathematics of Complex Biological Systems, under National Science Foundation Grant No.\ DMS-1764406 and Simons Foundation Grant No.\ 594594.
\v{Z}V was funded in part by the Slovene research agency grants J1-4001 and P1-0292.

\bibliographystyle{plain}
\bibliography{TheConnectivityOfVietorisRipsComplexesOfSpheres}

\end{document}